\documentclass[reqno]{amsart}

\usepackage[utf8]{inputenc}
\usepackage{hyperref}

\usepackage{changepage}
\pagestyle{plain}

\usepackage[pdftex]{graphicx}
\usepackage{color}
\usepackage{rotating} 

\usepackage{amsmath}
\usepackage{amsfonts}
\usepackage{amssymb}
\usepackage{amsthm}
\usepackage{mathtools}

\usepackage{framed}
 \usepackage{paralist}
\usepackage[shortlabels]{enumitem}

\usepackage{thmtools}
\usepackage{thm-restate}

\usepackage{hyperref}

\usepackage{cleveref}

\newtheorem{thm}{Theorem}
\newtheorem{lemma}{Lemma}

\theoremstyle{definition}
\newtheorem{Def}{Definition}

\theoremstyle{definition}
\newtheorem{Ex}{Example}
\newtheorem*{Ex*}{Example}

\theoremstyle{definition}
\newtheorem*{Construction}{Construction}

\theoremstyle{remark}
\newtheorem*{remark}{Remark}

\newcommand\compl[1]{V\backslash#1}
\newcommand\Z{\mathbb{Z}}       
\newcommand\Zg{\mathbb{Z}_{>0}}   
\newcommand\Zgeq{\mathbb{Z}_{\geq 0}}           

\newcommand\R{\mathbb{R}}  
\newcommand\Rg{\mathbb{R}_{>0}}  
    
\newcommand\Q{\mathbb{Q}}

\newcommand\mP{\mathcal{P}}

\newcommand\X{\mathcal{X}}

\newcommand\nn{\textnormal}
\newcommand\lin{\textnormal{lin}}
\newcommand\dist{\textnormal{dist}}
\newcommand\vol{\textnormal{vol}}
\newcommand\aff{\textnormal{aff}}
\newcommand\E{\textnormal{E}_{\mathcal{P}}}
\newcommand\G{\mathcal{G}}
\newcommand\vDC{DC-volume }
\newcommand\w{correction volume }
\newcommand\ws{correction volumes }

\newcommand\upper[1]{\textnormal{CDC}(#1)}    %CDC
\newcommand\DC[1]{\textnormal{DC}(#1)}    %CDC

\DeclareMathOperator{\Int}{int}

\DeclareMathOperator{\conv}{conv}
\DeclareMathOperator{\bd}{bd}
\DeclareMathOperator{\C}{Cyl}
\DeclareMathOperator{\DV}{DV}
\DeclareMathOperator{\trl}{trl}

\author[M.H.~Ring and A.~Sch\"urmann]{Maren H.~Ring and Achill Sch\"urmann}

\address{% 
Institute for Mathematics, 
University of Rostock,
18051 Rostock,
Germany}
\email{maren.ring@uni-rostock.de}
\email{achill.schuermann@uni-rostock.de}

\keywords{Ehrhart coefficients, local formula, lattice tiling} 

\subjclass[2010]{52C, 52B, 11H}

% 52Cxx Discrete geometry
% 11Hxx Geometry of Numbers
% 52Bxx Polytopes and Polyhedra

\title{Local formulas for Ehrhart coefficients from lattice tiles}
\date{\today}

\begin{document}
\maketitle

\begin{abstract}
As shown by McMullen in 1983, the coefficients of the Ehrhart
polynomial of a lattice polytope can be written as a weighted sum of
facial volumes. 
The weights in such a local formula depend only on the outer normal
cones of faces, but are far from being unique.
In this paper, we develop an infinite class of such local formulas. These are based on choices of fundamental domains in sublattices and obtained by 
polyhedral volume computations. 
We hereby also give a kind of geometric interpretation for the 
Ehrhart coefficients.
Since our construction gives us a great variety of possible local formulas, these can, for instance, be chosen to fit well with a given polyhedral symmetry group.
In contrast to other constructions of local formulas, ours does not rely on triangulations of rational cones into simplicial or even unimodular ones.
\end{abstract}

%%%%%%%%%%%%%%%%%%%%%%%%%%%%%%%%%%%%%%%%%%%%%%%%%%%%%%%%%%%%%%%%%%%%%%%%%%%%%%%%
%%%%%%%%%%%%%%%%%%%%%%%%%%%%%%%%%%%%%%%%%%%%%%%%%%%%%%%%%%%%%%%%%%%%%%%%%%%%%%%%

\section{Overview}\label{sec:Intro}

\subsection*{Notation}
We first fix some notation and recall some basic facts.
Let $V$ be a Euclidean space of dimension~$n$ with inner product  $\langle\cdot,\cdot\rangle$ and let $\Lambda$ be a  lattice in $V$ of rank $n$. For a linear subspace $S\subseteq V$, the set $\Lambda \cap S$ is a lattice in $S$, called the \emph{induced lattice} in~$S$. 
A (strict) \emph{tiling} of a set $A\subseteq V$ is a family  of subsets of $A$  that cover $A$ and have pairwise empty intersections.
A \emph{fundamental domain} for a sublattice $L\subseteq \Lambda$ is a connected and bounded subset $T\subseteq \lin(L)$ of the linear hull of $L$, such that the family of translations $\{x+T: \ x\in L\}$ is a tiling of $\lin(L)$. We further require the intersection of $T$ with any affine subspace to be measurable.
%every $L$-orbit intersects $T$ in exactly one point. For another definition and some examples of fundamental domains see Section \note{}\ref{Construction}.

The \emph{affine hull} $\aff(A)$ of a subset $A\subseteq V$ is the
smallest affine space containing~$A$.
We will mainly work with affine spaces $x+S$ which are translates 
of a linear subspace~$S$ by lattice vectors $x\in\Lambda$.
Here, the sublattice $\Lambda \cap S$ in $S$ is assumed to be of maximal possible rank~$\dim S$.
In these affine spaces the \emph{relative volume} or \emph{lattice
  volume} $\vol(A)$ of a set $A$ is defined as the Lebesgue measure,
normalized in a way that a fundamental domain of $\Lambda \cap S$ has volume~$1$.

For a polyhedron $\mP$, we consider its \emph{face lattice}, that is,
the partially ordered set consisting of all faces of $\mP$ with order given by inclusion, where we consider $\mP$ as a face of itself. We denote the order by $\leq$ and write $f<g$ for faces $f,g$ of $\mP$ if we want to exclude the case $f=g$.  The face lattice is a combinatorial lattice, since for every two faces $f,g$ there exist a unique least upper bound $f\vee g$ called \emph{join} and a unique greatest lower bound $f\wedge g$ called \emph{meet}. $f\vee g$ ist the smallest face that contains both $f$ and $g$, and $f\wedge g$ is given by the intersection $f\cap g$. Here, we formally consider the empty set as a face of $\mP$. Since we never use it, we shorten notation by always implying $f\neq \varnothing$ whenever we talk about faces $f\leq \mP$.

A \emph{rational cone} is a set that is defined by finitely many homogeneous inequalities with rational coefficients with respect to a lattice basis. A cone is called \emph{pointed} if it does not contain any nontrivial linear subspace.
Let $f$ be the face of a polyhedron~$\mP$. The (outer) \emph{normal cone} $N_f$ of $P$ at $f$ is defined as
\begin{displaymath}
N_f:=\{ x\in V : \langle x, y-s\rangle \leq 0 \, \, \forall y\in \mP\}
\end{displaymath}
for any vector $s$ in the \emph{relative interior} of $f$, i.e. the
interior with respect to the affine hull $\aff(f)$. It can be shown that this definition does not depend on the choice of~$s$. If $\mP$ is full dimensional, then all normal cones are pointed cones.
The \emph{polar cone} $C^\vee$ of a cone $C$ is defined as 
\begin{displaymath}
C^\vee:= \{ x\in V : \langle x, y\rangle \leq 0 \, \, \forall y\in C\}.
\end{displaymath}
It contains the linear subspace $C^\perp:=\{  x\in V : \langle x, y\rangle = 0 \, \, \forall y\in C\}$ that we call the \emph{orthogonal space} of $C$. 
Given a face $f$ of a polyhedron $P$, the polar cone $N_f^\vee$ of the
normal cone $N_f$ is in the literature often referred to as the \emph{cone of feasible directions} (cf. \cite{BarvinokBook}, \cite{BerlineVergne}). An example is given in Figure \ref{fig:Basics}. 

\begin{figure}[h] 
\begin{center}
\scalebox{0.5}{\input{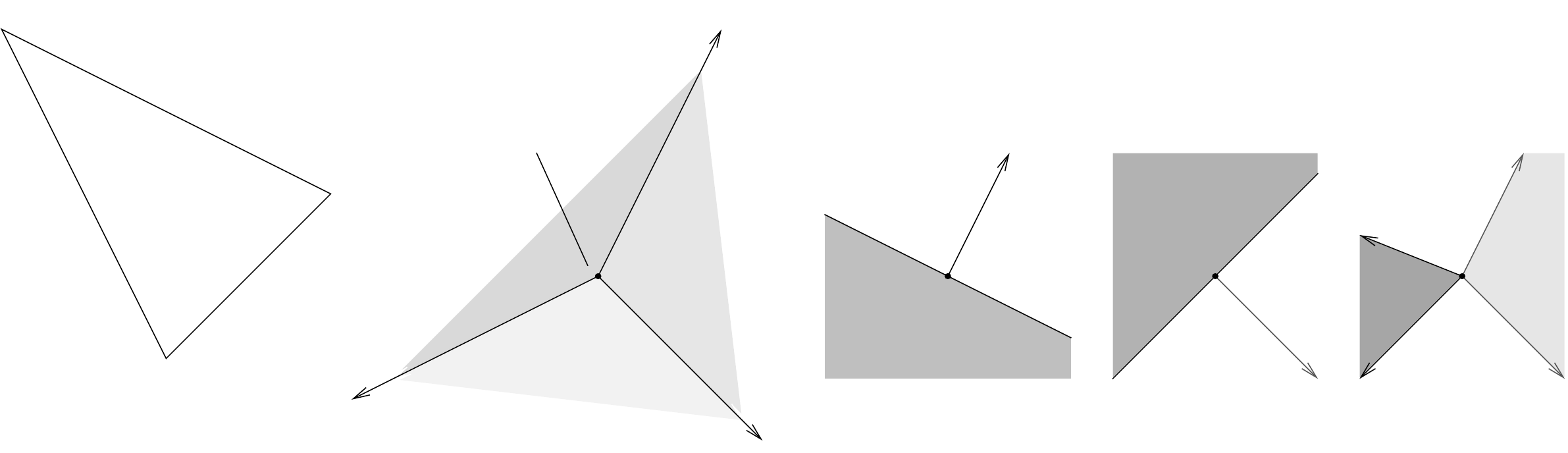_t}}
\end{center}
\caption{Triangle (simplex) $S$ with faces; normal cones of $S$; polar cones of certain normal cones.}
\label{fig:Basics}
\end{figure}

\subsection*{Local Ehrhart formulas}
Let $\mP$ be a \emph{lattice polytope}, that is, $\mP=\conv(v_1, \dots, v_m)$ for some $v_1, \dots, v_m\in \Lambda$. The \emph{Ehrhart polynomial} $\E$ of $\mP$ is the function that maps a nonnegative integer $t\in \Zgeq$ to the number of lattice points in the $t$-th dilate $t\mP$ of $\mP$. It was proven by Ehrhart \cite{Ehrhart} that this function is indeed a polynomial of degree $d:=\dim(\mP)$:
\begin{equation*}
\E(t):=|t\mP \cap \Lambda| = e_dt^d+e_{d-1}t^{d-1}+\dots+e_1t+e_0,
\end{equation*}
with $e_d, \dots, e_0\in \Q$. 
The Ehrhart polynomial plays an important role in areas such as combinatorics, integer linear programming and algebraic geometry. 
It is known that $e_0=1$, that the highest coefficient $e_d$ is the relative volume of $\mP$ and the second highest coefficient $e_{d-1}$ equals half the sum of relative volumes of facets of $\mP$. 
In dimension $d=2$, this gives a full description of the polynomial, generally known as \emph{Pick's formula} \cite{Pick}. The knowledge about the remaining coefficients in higher dimensions is still very limited. 
In 1975, in the context of toric varieties,  Danilov \cite{Danilov} asked whether it is possible to determine the $i$-th coefficient $e_i$ as a weighted sum of relative volumes of the $i$-dimensional faces of $\mP$, where the weights only depend on the normal cone of the faces.  
That such weights  do indeed exist was proved by McMullen in \cite{mcmullen83} (as a variation of his  Theorem 2 with adjustments explained in Section 6 of \cite{mcmullen83} and some of the proofs in \cite{mcmullen78}). Ten years later this was also proved by Morelli \cite{morelli} in the context of toric varietes. 
We note that these existence proofs are not  constructive,  Morelli however also gave a construction where the  weights are rational functions on certain Grassmanians (cf. \cite{PommersheimThomas}). 
The first  construction with rational weights was given by Pommersheim and Thomas in \cite{PommersheimThomas}.
In any case, the weights are far from being unique, which gives rise to the following definition.

\begin{Def} \label{Def:LocalFormula}
A real valued function $\mu$ on rational cones in $V$ is called a \emph{local formula for Ehrhart coefficients} (or \emph{local formula} for short), if for any lattice polytope $\mathcal{P}$ with Ehrhart polynomial $\E (t)=e_d t^d+e_{d-1}t^{d-1}+\dots + e_1t+e_0$, we have
\begin{align*}
e_i=\sum_{\substack{f \leq \mP \\ \dim(f)=i }} \mu(N_f) \vol(f), 
\end{align*}
for all $i\in\{0,\dots, d\}$. %$f\leq \mP$ means that $f$ is lower or equal in the face lattice of $\mP$.
\end{Def}
Since he was the first one to show the existence, local formulas are also  called \emph{McMullen's formulas}.
The normal cones of the faces of a  polytope do not change when taking a dilate by an integer $t\in \Zgeq$. 
The relative volume of a face $f$, however, is homogeneous of degree $\dim(f)$, $\vol(tf)=t^{\dim(f)} \cdot\vol(f) $. For a function $\mu$ on rational cones in $V$, being a local formula for Ehrhart coefficients is thus equivalent to 
\begin{align}
|t\mP \cap \Lambda| = \sum_{f\leq \mP} \mu(N_f)\vol(tf),
\end{align}
for all lattice polytopes $\mP$ and  all $t\in \Zgeq$.
Note that for $t=0$ the relative volumes of the right hand side vanish for all faces except the vertices.
Since both sides of this equation are polynomials, it suffices to show equality for a finite number of values for~$t$. We will use this fact to show that a function is a local formula by showing that equality holds for all sufficiently large $t$.

The word 'local' in the definition is justified, because the function
$\mu$ only depends on the normal cone of the face. That means the only
information taken from the face is its affine hull, respectively the
class of parallel affine spaces it belongs to.
In particular, $\mu$ does neither depend on the size or the shape of a
face, nor on its boundary or other parts of the polytope. 
Advantages of such a local formula are immediate: Properties like positivity of the Ehrhart coefficients can be deduced from the values of $\mu$, without computing the actual coefficients as attempted in \cite{CastilloLiu}. Moreover, the values stay the same for all faces with the same normal cone, so that computations can be done for a whole class of polytopes at once.   For example, the computations of  $\mu$ for the regular permutohedron give the values of all generalized permutohedra as defined in \cite{PostnikovReinerWilliams}. 

Constructions of local formulas have been given by Pommersheim and 
Thomas \cite{PommersheimThomas} and by Berline and Vergne
\cite{BerlineVergne}. While the first is obtained from an expression
for the Todd class of a toric variety, the second depends on the
construction of certain differential operators.
We note that Pommersheim and Thomas also take the normal cone as input for
their local formula, while Berline and Vergne use the transverse cone of a
face, which is an affine version of the cone of feasible directions
modulo its contained linear subspace.

\subsection*{Main results}

In this paper, we develop an infinite class of local formulas for Ehrhart
coefficients. In contrast to the previous constructions it is elementary in the sense that all
information is attained by considering sums and differences of relative volumes.
At this point it is unclear but possible that  other known formulas can be reproduced using our construction. 
In any case, our construction does allow a greater
variety. In Example \ref{Ex:SquareNonStandard} for instance we show that irrational values can occur, in contrast to the local formulas by Pommersheim and Thomas \cite{PommersheimThomas} and by Berline and Vergne \cite{BerlineVergne}.  If desired, it is easily possible though  to restrict our local formulas to rational values as well.
%For proving correctness, we do not need a valuation property for our local formula. Nevertheless we do conjecture that it is a valuation – a proof will follow as soon as possible.
A  computational advantage is that our construction does not rely on simplicial
(cf. \cite{PommersheimThomas})
or even unimodular triangulations 
of cones (cf.~\cite{BerlineVergne}).

We first restrict to the case that the considered rational cones are $pointed$ and the lattice polytopes are full dimensional. For a pointed  rational cone $C$, we first assign a subset of $V$ that we want to call \emph{region of $C$}, denoted by $R(C)$. The construction of this region is quite involved, a thorough description is given in Section \ref{sec:Regions}. 

Using the region $R(C)$, we want to determine the value $\mu(C)$. To give an intuition how this can be achieved, 
we interpret the number of lattice points in $t\mP$ as the volume of all translates of a fundamental domain $T$ of $\Lambda$ by the lattice points in $\mP$:
\begin{equation} \label{Eqn:DCIntro}
|\Lambda \cap t\mP| = \sum_{x\in \Lambda\cap t\mP} \vol(x+T) = \vol\underbrace{((\Lambda\cap t\mP)+T)}_{=:\DC {t\mP}}.
\end{equation}
The first equation holds, since by definition $\vol(T)=1$ for any fundamental domain~$T$ of~$\Lambda$, and the second equation follows from $(x+T)\cap (y+T) = \varnothing$ for all $x,y\in \Lambda$ with $x\neq y$. 
We call the set $\DC {Q} :=(\Lambda \cap Q)+T$ 
a \emph{(fundamental) domain complex} of the polyhedron $Q$. % (cf. Figure \ref{fig:Tiling&DC}, left).

\begin{figure}[h] 
\begin{center}
\scalebox{0.52 }{\input{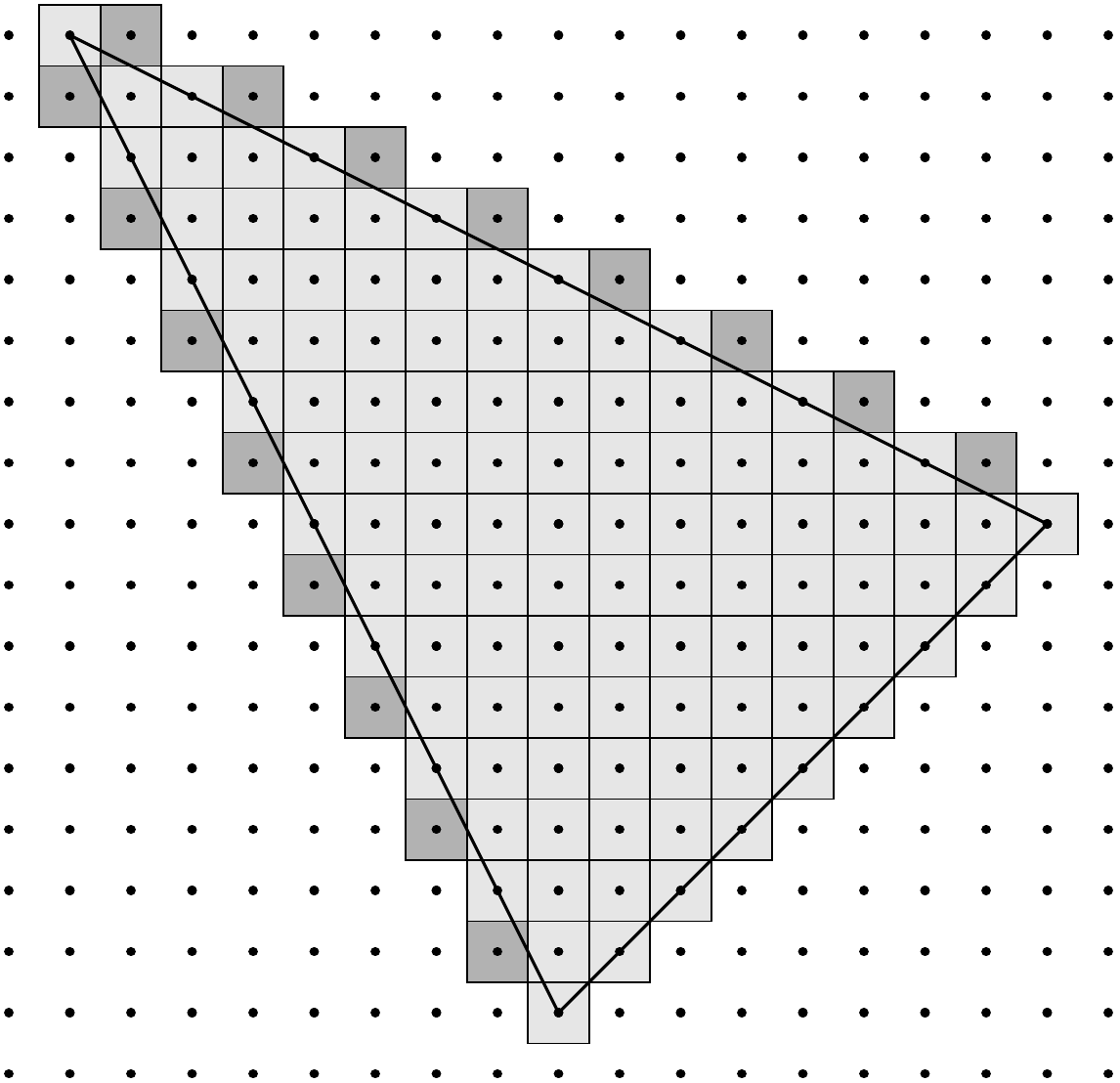_t}} \ % \qquad% \qquad 
\scalebox{0.52 }{\input{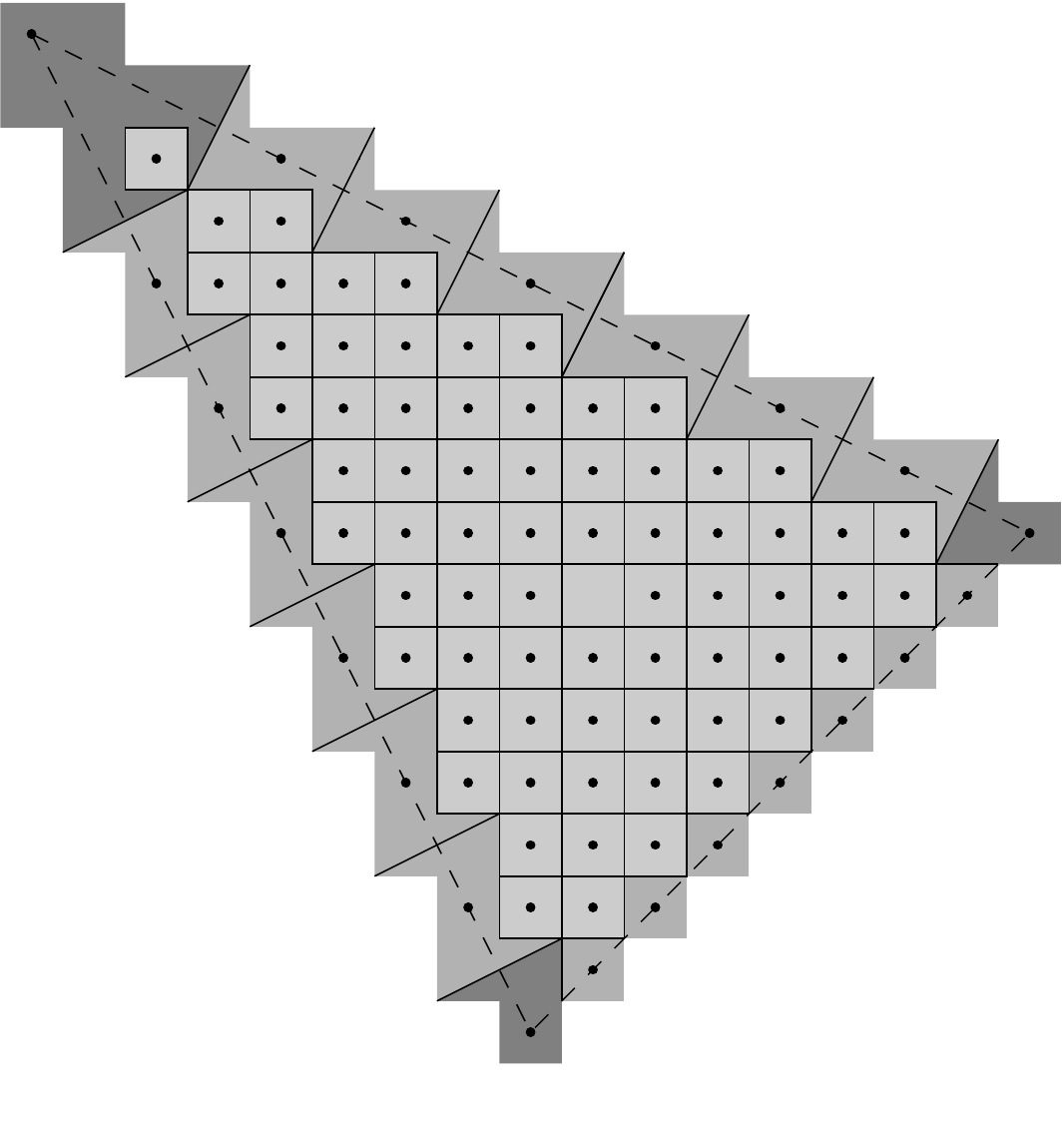_t}}
\end{center}
\caption{ \textbf{Left:}  The covering domain complex of the dilated triangle $4S$  (light and dark grey) and its domain complex (light grey); \textbf{right:}  Tiling of $\upper {4S}$} by regions corresponding to the normal cones of $S$ 
\label{fig:Tiling&DC}
\end{figure}

 To determine the values of $\mu$ properly, we will need correcting terms computed outside of the domain complex. This leads to the following definition of the \emph{covering domain complex} of a polyhedron $Q$, short  $\upper Q $ as
\begin{equation*}
\upper Q := \bigcup_{\substack{x\in \Lambda \\ (x+T)\cap Q\neq \varnothing}} (x+T).
\end{equation*}
It covers $Q$ and can be seen as an upper bound of the domain complex, see Figure~\ref{fig:Tiling&DC}, left.

The most important property of the regions will be the following: Let  $\mP$ be a full dimensional
 lattice polytope %{\red $T$ a fundamental domain of the lattice $\Lambda$}
 and $f\leq \mP$ a face. For $t\in \Zg$ we define the set $\X(tf)$ of all \emph{feasible lattice points in} $t f$  as the finite set of lattice points in the dilated face $tf$ that are 'far enough'  from its boundary (a precise definition can be found in Section \ref{sec:Regions}). Then we have a tiling  of the covering domain complex into regions:

\begin{restatable}[Tiling]{thm}{Tiling}
 \label{Thm:TilingIntro}
Let $\mP\subseteq V$ be a full dimensional 
lattice polytope. There exists a $t_0\in \mathbb{Z}_{>0}$ such that for each $t\geq t_0$   we have a tiling of  $\upper {t\mP}$
into translated regions of the form
\begin{equation*}
\{x+R(N_f) : \; f\leq \mP, \, x\in \X(tf)\}.
\end{equation*}
%\begin{align}
%V=\bigcup_{f \leq \mP} \; \bigcup_{x\in \X(tf)} \big( x+R(N_f) \big)
%\end{align}
\end{restatable}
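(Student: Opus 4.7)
The plan is to verify the two tiling conditions in turn. Write $\mathcal{F}_t$ for the proposed family $\{x+R(N_f):f\leq \mP,\, x\in \X(tf)\}$; I need to show that its members are pairwise disjoint and that their union equals $\upper{t\mP}$. Both parts will rest on the main structural property of the regions developed in Section~\ref{sec:Regions}: for each face $f\leq \mP$, suitable translates of the family $\{R(N_g): g\geq f\}$ partition the tangent cone $\lin(f)+N_f^\vee$ of $\mP$ along $f$.

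First I would choose $t_0$ large enough that several threshold conditions hold simultaneously. For every face $f$ and every $x\in \X(tf)$, the translate $x+R(N_f)$ should lie entirely in $\upper{t\mP}$, and moreover in a neighborhood of $x$ where the local combinatorics of $t\mP$ agree with those of the tangent cone along $f$. This is essentially built into the definition of $\X(tf)$ (``far enough from lower-dimensional faces'', measured against the bounded extent of the $R(N_g)$ for $g\geq f$). With this choice, pairwise disjointness of members of $\mathcal{F}_t$ can be localized to a single tangent-cone picture around one of the involved feasible lattice points, where it is forbidden by the partition property stated above.

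For covering, I would argue that every $y\in\upper{t\mP}$ belongs to exactly one $x+R(N_f)$ with $x\in\X(tf)$. The unique lattice translate $x_0+T$ containing $y$ pins down a base lattice point $x_0$, and the position of $y-x_0$ relative to the normal fan of $\mP$ identifies a face $f\leq \mP$ and a nearby feasible point $x\in \X(tf)$. Applying the tangent-cone partition at $f$ inside the appropriate neighborhood then exhibits $y$ as lying in exactly one translated region from $\mathcal{F}_t$; that the assignment is surjective onto $\upper{t\mP}$ and does not overshoot follows from the feasibility bound once $t\geq t_0$.

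The main obstacle will be the interlocking of regions attached to faces of different dimensions. A feasible lattice point on a high-dimensional face sits near lower-dimensional faces, and the regions associated to these different faces must fit together without gap or overlap all the way down the face lattice. This is precisely what the construction of $R(C)$ in Section~\ref{sec:Regions} is designed to deliver, and the proof reduces to importing the local partition property, combining local tilings around each vertex of $\mP$, and using the quantitative feasibility bound built into $\X(tf)$ to prevent interference near the boundary of $\upper{t\mP}$.
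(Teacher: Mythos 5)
Your overall strategy is the one the paper follows: establish a cone-level tiling statement (translates of $R(K)$ for $K\leq C$ tile $\upper{C^\vee}$ for each pointed rational cone $C$; this is Lemma~\ref{Lm:TilingOneCone}), use boundedness of the regions (Lemma~\ref{Lm:bounded}) to choose $t_0$ so that regions attached to disjoint faces cannot meet, and then glue the local tilings around the vertices of $t\mP$. The disjointness half of your argument is fine in outline: for two candidate translates whose faces share a vertex $v$, both are members of the tiling of $\upper{N_v^\vee}$ shifted by $tv$ and hence disjoint, while faces with empty intersection are handled by boundedness and $t\geq t_0$.

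The genuine gap is in the covering step. You write that the position of $y-x_0$ relative to the normal fan ``identifies a face $f$ and a nearby feasible point $x\in\X(tf)$'', but this is exactly the point that needs proof. The local tiling at a single vertex $v$ only exhibits $y$ in some $x+tv+R(N_f)$ with $x\in X^{N_v}_{N_f}$; feasibility, however, requires membership in $X^{N_{v'}}_{N_f}+tv'$ simultaneously for \emph{every} vertex $v'$ of $f$, and the tilings at different vertices may a priori hand you different faces $f_i$ and different base points $x_i$. The paper reconciles these data by selecting, among the pairs $(f_i,x_i)$ produced by the vertex tilings, one with $\dim f_i$ minimal, and then arguing by contradiction that this one is feasible: if the condition failed at some other vertex $v_2$ of $f_1$, the region produced by the tiling at $v_2$ would intersect $x_1+tv_1+R(N_{f_1})$ (both contain $y$), which is excluded by property (\ref{Ppt:Nonintersect}) when $f_1$ and $f_2$ are incomparable, by minimality of $\dim f_1$ when $f_2\subsetneq f_1$, and by a second application of the cone-level tiling at $N_{f_1}$ (via $X^{N_{v_2}}_{N_{f_2}}\subseteq X^{N_{f_1}}_{N_{f_2}}$) when $f_1\subseteq f_2$. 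Without some such mechanism your covering argument does not close: the ``quantitative feasibility bound built into $\X(tf)$'' cannot by itself certify an intersection of conditions that the local picture at one vertex does not see.
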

See Figure \ref{fig:Tiling&DC} (right) for an example in dimension 2 (cf. Section \ref{sec:compsym}, Example \ref{Ex:TriangleStandard}).

%The set $\X(tf)$ of all feasible lattice points in $tf$ might be empty for small $t$, such that the condition $t\geq t_0$ is  necessary. 

By taking the volume of the respective
part of the domain complex in each region of the tiling in
Theorem~\ref{Thm:TilingIntro} (cf. Figure \ref{fig:Tiling&DC}, right), we get
\begin{equation} \label{Eqn:DCTilingIntro}
|\Lambda \cap t\mP|= \vol(\DC {t\mP}) = \sum_{f\leq \mP} \sum_{x\in \X(tf)}\underbrace{ \vol \big( (x+R(N_f)) \cap \DC {t\mP} \big)}_{(*)}.
\end{equation}

It turns out (cf. Section \ref{sec:ProofLocForm}) that ($*$) can be defined only in terms of the cone $N_f$ 
which leads to the definition of $v_C$, the \emph{\vDC} in $R(C)$, for pointed rational cones $C$:
\begin{equation} \label{Eqn:DefDCVolume}
v_{C}:= \vol\big( R(C) \cap \DC {C^\vee} \big).
\end{equation}
For an illustration, see Figure \ref{fig:vN}.

\begin{figure}[h] 
\begin{center}
\scalebox{0.5}{\input{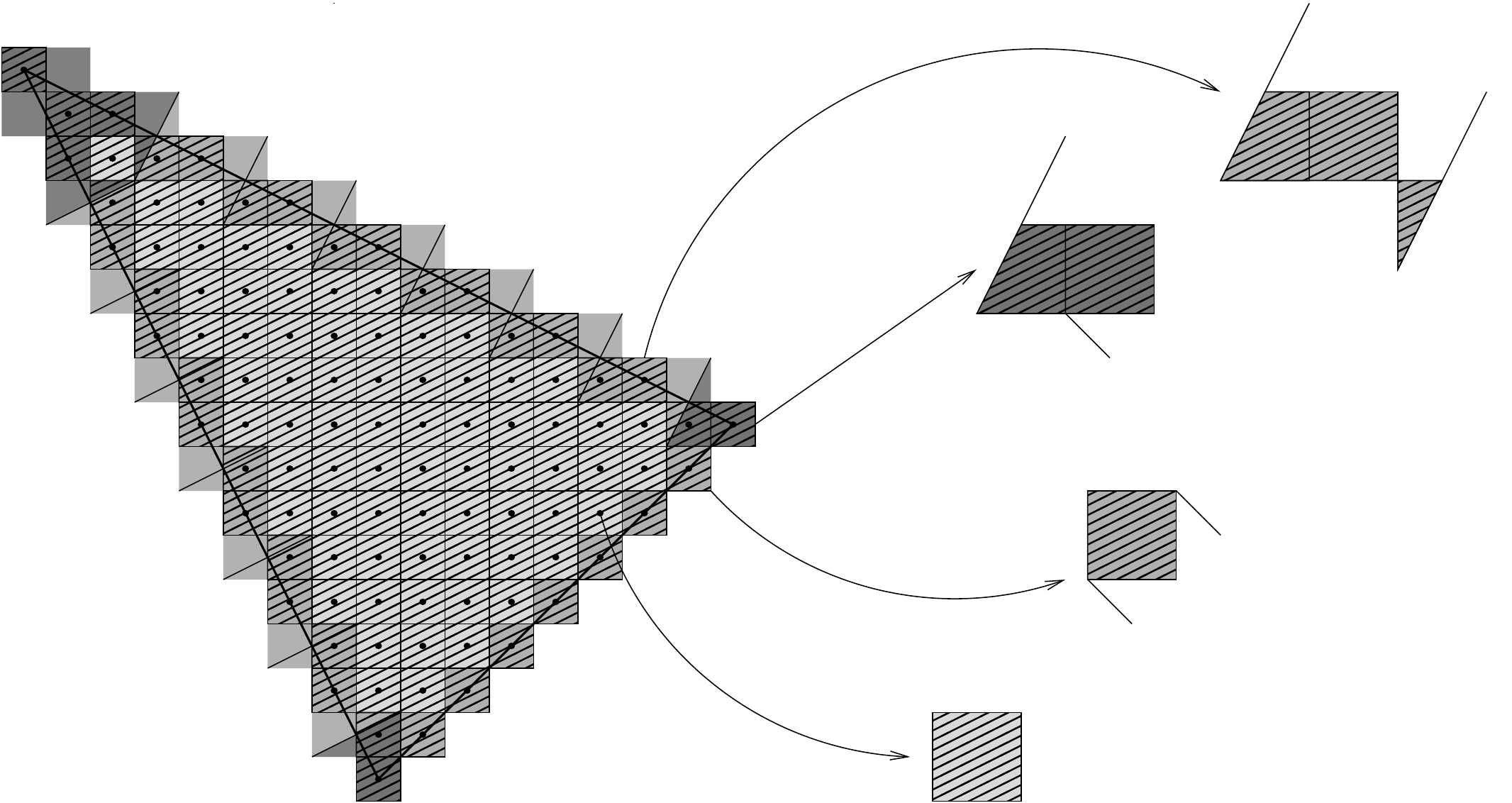_t}}
\end{center}
\caption{Values for the \vDC $v_C$ for normal cones $C=N_f$ of certain faces $f$ of the triangle $S$.}
\label{fig:vN}
\end{figure}

This already looks a lot like a local formula, especially since
$|\X(tf)|$ behaves like $\vol(t f)$ in the limit $t\rightarrow
\infty$. In fact, $|\X(tf)|$ equals $|\Lambda\cap tf|$ 
minus some lower order terms.
To achieve exactness, we use $v_C=v_{N_f}$ together with a
\emph{\w} defined by 
\begin{equation} \label{Eqn:DefCorrectionVolume}
w^C_K:=\vol (R(C) \cap (K^\perp \cap C^\vee))
\end{equation}
for faces $K<C$. Exemplary values with illustrations are given in
Figure \ref{fig:w}. Note that unlike $v_C$, the correction term
$w^C_K$ measures a volume in $K^\perp$, which is only full-dimensional
if $K$ is the \emph{trivial cone} $C_0:=\{0\}$. Note that here 
$N_{\mP}=\{0\}$, since for the moment we are still assuming $\mP$ to be full dimensional.

\begin{figure}[h] 
\begin{center}
\scalebox{0.6 }{\input{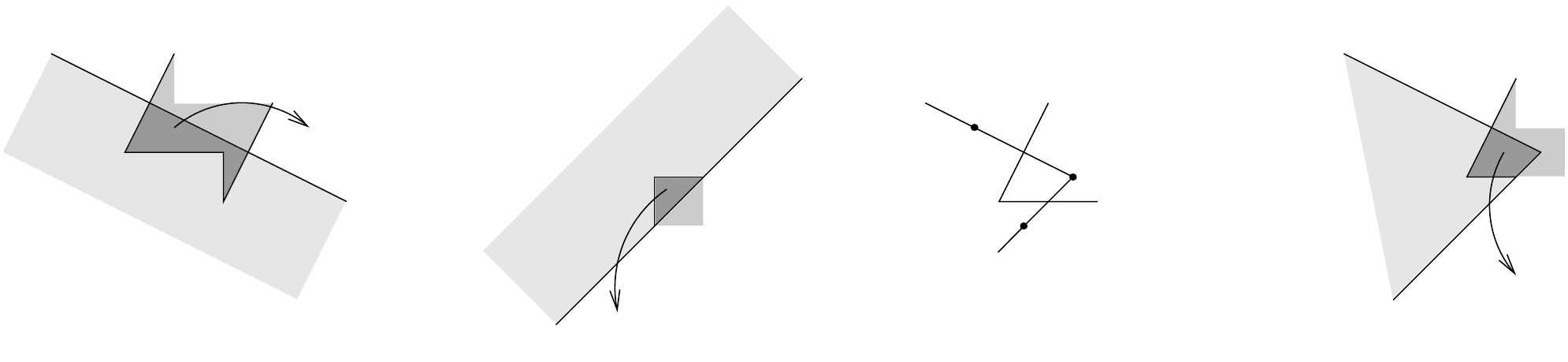_t}} 
\end{center}
\caption{Values for the \w $w^C_K$ with $C=N_f$ and $K=N_g$ for certain faces $f<g$ of $S$. }
\label{fig:w}
\end{figure}

Equation (\ref{Eqn:DCTilingIntro}) thus yields
\begin{equation}
|\Lambda\cap t\mP|=  \sum_{f\leq \mP} \sum_{x\in \X(tf)} v_{N_f} =  \sum_{f\leq \mP}  v_{N_f} \cdot |\X(tf)|.
\end{equation}

Using these notations, the function $\mu$  can be defined on pointed rational cones in~$V$.
We define it by induction on the dimension of the cone, starting with the trivial cone $C_0=~\{0\}$ by setting
\begin{equation} \label{Eqn:muC0Intro}
\mu(C_0):=1.
\end{equation}
For a pointed rational cone $C\subseteq V$ with $\dim(C)\!\geq\! 1$, we  define
\begin{equation} \label{Eqn:muIntro}
\mu(C):=v_C - \sum_{K< C} w^C_K \cdot \mu(K).
\end{equation}

For a rational cone $C\subseteq V$ that is not pointed, but contains a maximal nontrivial linear subspace~$U$, we can consider the pointed cone $C':=C\cap U^\perp$  in $U^\perp$, where we consider $U^\perp$ as a Euclidean space equipped with the induced inner product and the  lattice $\Lambda\cap U^\perp$. We can then construct $R(C') \subseteq U^\perp$ and set
 \begin{equation*}
\mu(C):=\mu(C').
\end{equation*} 

That leads us to the main result of this work:

\begin{thm}[Local Formula] \label{Thm:MuIntro}
The function $\mu$ on 
rational cones in $V$ as defined in Equations (\ref{Eqn:muC0Intro}) and (\ref{Eqn:muIntro}) is a local formula for Ehrhart coefficients. 
\end{thm}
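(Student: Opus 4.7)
Both sides of the target identity $|t\mP \cap \Lambda| = \sum_{f\leq \mP} \mu(N_f)\vol(tf)$ are polynomials in $t$, so it suffices to verify it for all sufficiently large $t \in \Zg$. I would first treat the case that $\mP$ is full dimensional, so that all normal cones $N_f$ are pointed; the general case then follows from the reduction $\mu(C) = \mu(C \cap U^\perp)$ applied inside $\aff(\mP)$.

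For $t$ large enough, Theorem \ref{Thm:TilingIntro} provides a tiling of $\upper{t\mP}$ into translates $x + R(N_f)$. Restricting this tiling to $\DC{t\mP} \subseteq \upper{t\mP}$ and combining with (\ref{Eqn:DCIntro}) gives
\[
|t\mP \cap \Lambda| = \vol(\DC{t\mP}) = \sum_{f \leq \mP} \sum_{x \in \X(tf)} \vol\bigl((x + R(N_f)) \cap \DC{t\mP}\bigr).
\]
For a feasible lattice point $x \in \X(tf)$, the polytope $t\mP$ agrees with $x + N_f^\vee$ on a neighborhood containing $x + R(N_f)$ (this is the whole point of feasibility), so the inner summand equals $\vol(R(N_f) \cap \DC{N_f^\vee}) = v_{N_f}$ by (\ref{Eqn:DefDCVolume}). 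This yields the intermediate identity $|t\mP \cap \Lambda| = \sum_{f \leq \mP} v_{N_f} \cdot |\X(tf)|$.

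The second ingredient is a \emph{counting identity}
\[
\vol(tf) = |\X(tf)| + \sum_{h < f} w^{N_h}_{N_f} \cdot |\X(th)|
\]
for each face $f \leq \mP$, proved by a parallel tiling argument carried out inside the affine hull $\aff(tf)$. Slicing the tiling of $\upper{t\mP}$ by $\aff(tf)$, each tile $x + R(N_f)$ with $x \in \X(tf)$ contributes a set of unit volume, while each tile $x + R(N_h)$ with $h < f$ and $x \in \X(th)$ contributes a set of volume $w^{N_h}_{N_f}$ by (\ref{Eqn:DefCorrectionVolume}). One also needs that tiles for faces $g > f$ contribute nothing to this slice and that the resulting slices aggregate exactly to a tiling of $tf$; both are consequences of the geometric construction of the regions $R(C)$ in Section \ref{sec:Regions}.

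With these two ingredients in hand, the theorem follows by formal computation. Rewriting (\ref{Eqn:muIntro}) as $v_{N_f} = \mu(N_f) + \sum_{g > f} w^{N_f}_{N_g} \mu(N_g)$, substituting into the intermediate identity and interchanging the order of summation (swapping the roles of $f$ and $g$) yields
\[
|t\mP \cap \Lambda| = \sum_{f \leq \mP} \mu(N_f)\left[ |\X(tf)| + \sum_{h < f} w^{N_h}_{N_f} |\X(th)| \right] = \sum_{f \leq \mP} \mu(N_f)\vol(tf),
\]
as required. The hard part will be the counting identity of the third paragraph: checking that slicing by $\aff(tf)$ gives a genuine tiling, that tiles from faces $g > f$ contribute only in measure zero, and that the pieces aggregate precisely to $\vol(tf)$, all depends on the detailed geometric properties of the regions $R(C)$ from Section \ref{sec:Regions}. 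The remaining steps, including the reduction from non-pointed cones to the pointed case, are either direct consequences of Theorem \ref{Thm:TilingIntro} or routine bookkeeping.
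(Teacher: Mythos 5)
Your proposal is correct and follows essentially the same route as the paper: the tiling of Theorem \ref{Thm:TilingIntro} gives $|t\mP\cap\Lambda|=\sum_f v_{N_f}|\X(tf)|$ (the paper's Lemma \ref{Lm:PNvee}), your ``counting identity'' is exactly the paper's Lemma \ref{Lm:Vol(tf)} (with $w^{N_f}_{N_f}:=1$), whose proof indeed reduces to showing that only tiles $x+R(N_g)$ with $g\leq f$ meet $tf$ (Lemma \ref{Lm:CoveringOfFace}), and the final step is the same summation rearrangement plus polynomiality in $t$. You have also correctly located the technical burden in the slicing/covering step for $\aff(tf)$.
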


The given construction for local formulas has several nice
properties. It is more basic than previous constructions in the sense
that it is based on basic notions from polyhedral geometry. In a way,
it hereby also gives a geometric meaning to the coefficients of the Ehrhart polynomial.

Another nice property of the construction is the freedom of choice of
a fundamental domain in each occurring sublattice. An interesting
observation is, for example, that this construction also allows
irrational values, which can be achieved simply by taking a
fundamental domain and shifting it by an irrational vector. This shows
that the range of this construction is wider than the one of others previously
known. It is yet to be determined how extensive this variety actually
is, whether, for example, hitherto existing constructions can be
described in terms of our construction using certain fundamental
domains. In~\cite{CastilloLiu} it is shown that all local formulas
that are invariant under the standard action of the symmetric group
must agree on certain polytopes that are invariant with respect to
this action themselves. As described in Section \ref{sec:Regions}, a
natural choice for fundamental domains are the so-called
\emph{Dirichlet--Voronoi cells} that depend on a chosen inner
product. Given the lattice $\Z^n\subseteq \R^n$, and taking the
Dirichlet--Voronoi cell given by the standard inner product, one gets
a fundamental domain and thus a local formula that is symmetric about
the origin.  This principle can be extended to other symmetries and,
as we will discuss in Section \ref{sec:compsym}, it leads to new possibilities to exploit symmetries in given polytopes.  

Other local formulas (\cite{mcmullen83}, \cite{morelli}, \cite{PommersheimThomas}, \cite{BerlineVergne}) are known to have the nice property of being a valuation, that is, they satisfy
\begin{align*}
\mu(C \cup K) + \mu(C\cap K) = \mu(C) + \mu(K)
\end{align*}
for convex rational cones such that $C\cup K$ is also a convex rational cone. 
We \textbf{conjecture} that our function $\mu$ is a valuation as well.
However, a proof of this property seems non-trivial at this point. 
In contrast to other local formulas we do not need the valuation property for our proofs or the computation of  $\mu$.

As for other known constructions of local formulas, ours can be extended to rational polytopes $P$:
Given the normal cone $N_f$ and the translation class $\trl(f)$ for each face $f$ of $P$, the construction can be adjusted such that 
\begin{equation*}
|P\cap \Lambda|=\sum_{f\leq P}\mu(N_f, \trl(f)) \vol(f),
\end{equation*}
which yields a formula for the Ehrhart quasi-polynomials of a rational polytope. A more involved proof 
of this property is in progress and will be presented in \cite{Ring}.

Our paper is organized as follows. In Section~\ref{sec:Regions} we
give a construction of regions for pointed rational cones based on a
choice of fundamental domains. We introduce an important class of
examples coming from Dirichlet--Voronoi cells of lattices. 
In Section~\ref{sec:compsym} we then give several descriptive, two-dimensional 
examples with concrete values for~$\mu$. Moreover, we 
describe a canonical way to exploit symmetries using certain
Dirichlet--Voronoi cells. In Section~\ref{sec:Tiling}, we give a proof
for Theorem~\ref{Thm:TilingIntro}, showing that a tiling of~$\upper \mP$ can be
gained from translates of regions corresponding to normal cones of a
full dimensional lattice polytope $\mP$. 
In Section~\ref{sec:ProofLocForm} we close with a proof of Theorem~\ref{Thm:MuIntro},
showing that the constructed functions~$\mu$ on rational cones are indeed local formulas.

%%%%%%%%%%%%%%%%%%%%%%%%%%%%%%%%%%%%%%%%%%%%%%%%%%%%%%%%%%%%%%%%%%%%%%%%%%%%%%%
%%%%%%%%%%%%%%%%%%%%%%%%%%%%%%%%%%%%%%%%%%%%%%%%%%%%%%%%%%%%%%%%%%%%%%%%%%%%%%%

\section{Construction of Regions} \label{sec:Regions}

The construction of regions that we use to define the local formula $\mu$ is based on the choice of fundamental domains, not only for $\Lambda$, but for all occurring induced sublattices. Different fundamental domains form different regions and ultimately result in different local formulas.

By definition,
fundamental domains (for the lattice $\Lambda$ considered as an additive
group acting on $V$ by translation)
have the property that
every $\Lambda$-orbit of $V$ meets $T$ in exactly one point. Besides being bounded and connected, we further require our fundamental domains to contain $0$, which we obtain by translating an arbitrary fundamental domain by the negative of the unique lattice point it contains. 
We make this assumption to simplify notation considerably, but it can
actually be omitted, which we will briefly discuss in 
Section~\ref{sec:compsym} after Example~\ref{Ex:SquareNonStandard}.

\begin{Ex*}
An important family of examples of fundamental domains are \emph{Dirichlet--Voronoi cells}. %For lattices sometimes referred to as 'Wigner-Seitz cells' 
Given a space $V$ and an inner product $\langle \cdot  ,\cdot \rangle$ with induced norm $\| \cdot \|$, the Dirichlet--Voronoi cell of a sublattice $L \subseteq \Lambda$ is defined as 
\begin{equation*}
\DV(L, \langle\cdot,\cdot\rangle):=\{x\in \lin(L) : \|x\| \leq \|x-a\| \textnormal{ for all } a\in L\}. 
\end{equation*}
In this definition, it is not yet a fundamental domain of the lattice~$L$, since it is closed and thus translates by lattice points can intersect on the boundary.

However, by considering  the Dirichlet--Voronoi cell half open, it can be seen as a fundamental domain of the lattice. %We consider the group action of the lattice $\Lambda$ seen as an additive group acting on $V$ by translation. Given a closed Dirichlet--Voronoi cell $D$, we can construct a fundamental domain as follows: We take the equivalence classes modulo the group action of the realtive interior of each face of $D$, choose one representative out of each class and take the union.  
In Figure \ref{fig:Voronoi}, two different Dirichlet--Voronoi cells  in
$\R^2$ with lattice $\Z^2$ are given. They correspond to the standard
inner product and the inner product $\langle x,y\rangle = x^tGy$
defined by the Gram matrix $G=\begin{psmallmatrix}2&1\\1&2\end{psmallmatrix}$, respectively.

\begin{figure}[h]
\begin{center}
\scalebox{0.2}{\input{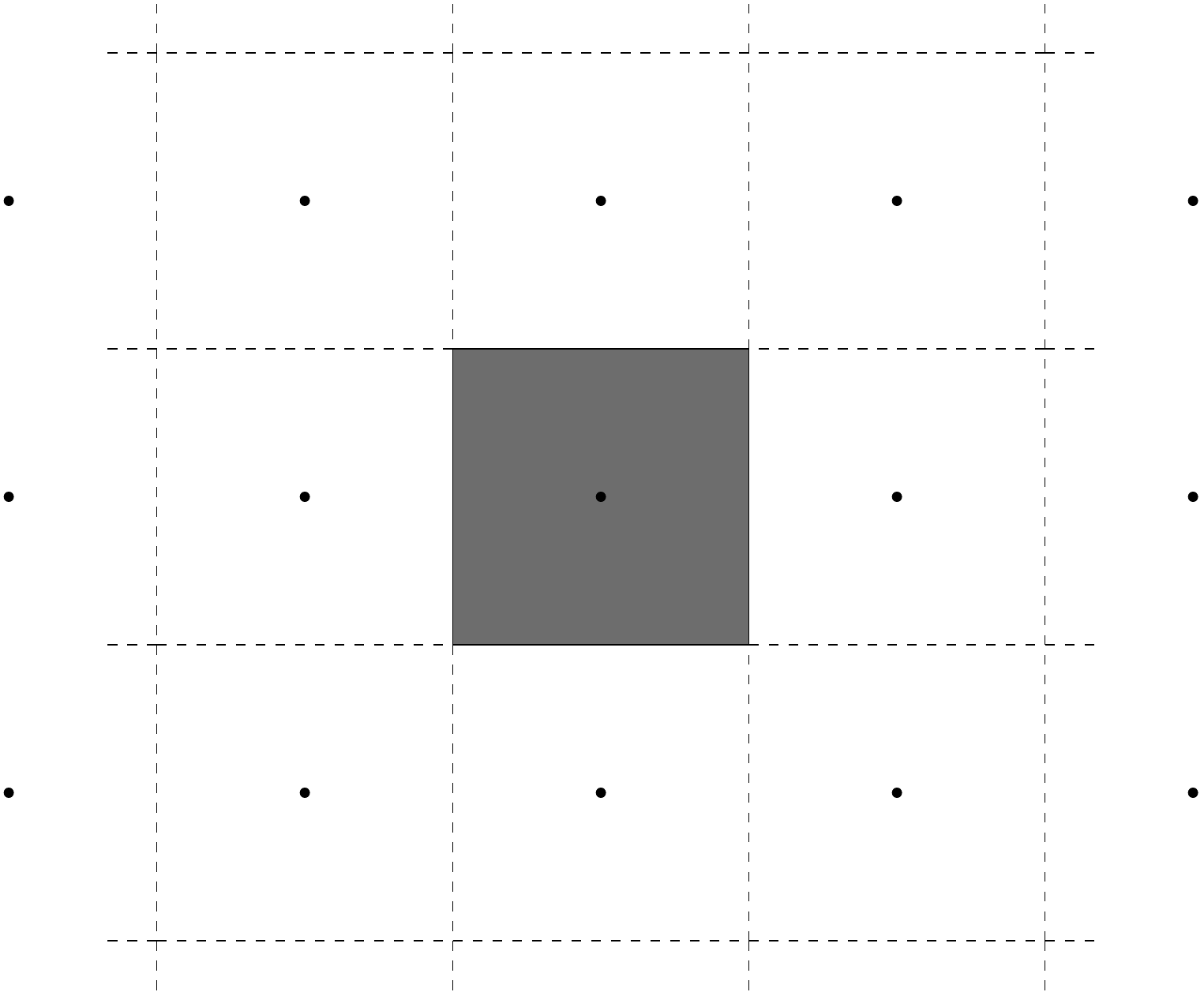_t}}  \qquad \qquad \qquad
\scalebox{0.2}{\input{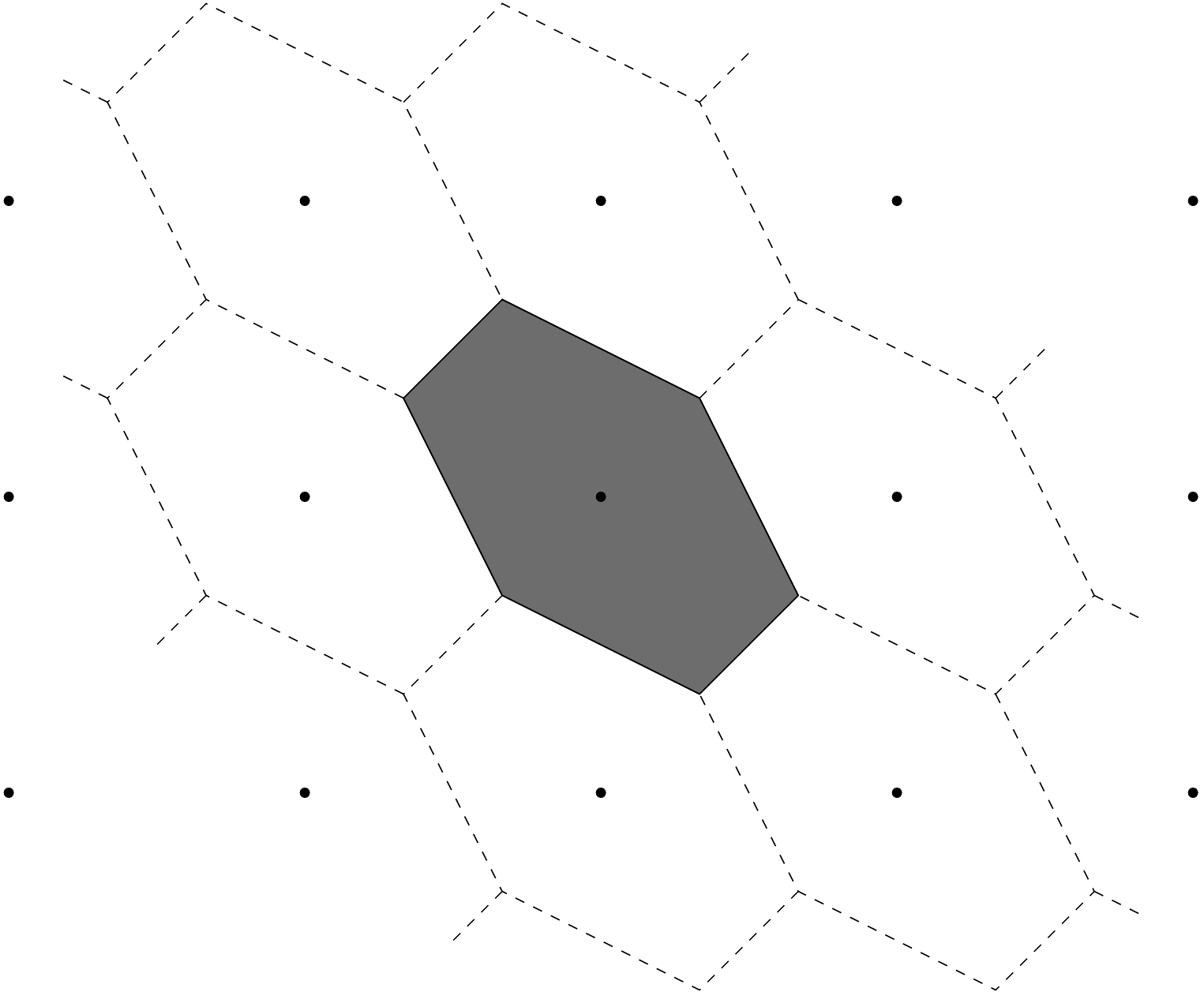_t}}
\end{center}
\caption{Square and hexagonal Dirichlet--Voronoi cells of $\Z^2$.}
\label{fig:Voronoi}
\end{figure}

\end{Ex*}

For a pointed rational cone $C$ in $V$, let $L(C):=\Lambda \cap C^\perp$ be the induced lattice in  $C^\perp$ and let $T(C)\subseteq C^\perp$ be a fundamental domain of $L(C)$. % in $C^\perp$. 
If we consider the domain complex as illustrated in
Figure~\ref{fig:Tiling&DC} on the left, 
we see that the structure of the
domain complex is periodic with respect to lattice translations from~$L(C)$.
To obtain these periodicities, we build the regions as part of the strip $\lin(C)+T(C)$.  For each face $K$ of $C$, we further cut out all translates of regions $R(K)$ that 'fit properly'
 into $C^\vee$. A thorough definition of the inductive construction of the regions  is given below. The definition is somewhat technical. As an example, we give a detailed picture for two rational cones of different dimension in $\R^2$ in Figure \ref{fig:ConstructRC}.

\begin{Construction} 
%We first construct the regions for \emph{pointed} rational cones. The construction for genereal rational cones will then be reduced to that. Note that if a cone is pointed, then so are all its faces.
Let $C$ be a pointed rational cone in $V$. 
 If $C=C_0=\{0\}$ is the trivial cone, we set
\begin{equation*}
R(C_0):=T(C_0).
\end{equation*}

Otherwise, if $\dim(C)\geq 1$, we assume we have constructed all regions $R(K)$ for faces $K<C$. 
Let $X_K^C$ be the set of all points $x$ in $L(K)$ that satisfy:

\begin{compactenum}[(I)]
\item \label{Ppt:Inside}  $x+R(K) \subseteq \Int(M^\vee)$ \hfill for all rays $M \leq C$ with $M\nleq K$\; \;\,  and
\item \label{Ppt:Nonintersect} $\left( x+ R(K) \right) \cap \left(x'+ R(K')\right) = \varnothing$ \; \hfill  \; for all $K'<C$,   with
 \raggedleft{ $K'$ \textnormal{ incomparable to } $K$  and $x'\in L(K')$.}   
\end{compactenum}
Then  we define the region $R(C)$  of $C$ as
\begin{equation} \label{Eqn:region}
R(C):=\left(V\backslash  \bigcup_{K<C} \left(  X_K^C+R(K)\right) \right) \cap \left( T(C) + \lin(C)\right) \cap \upper {C^\vee}.
\end{equation}

\begin{figure}[h]
\begin{center}
\scalebox{0.8}{\input{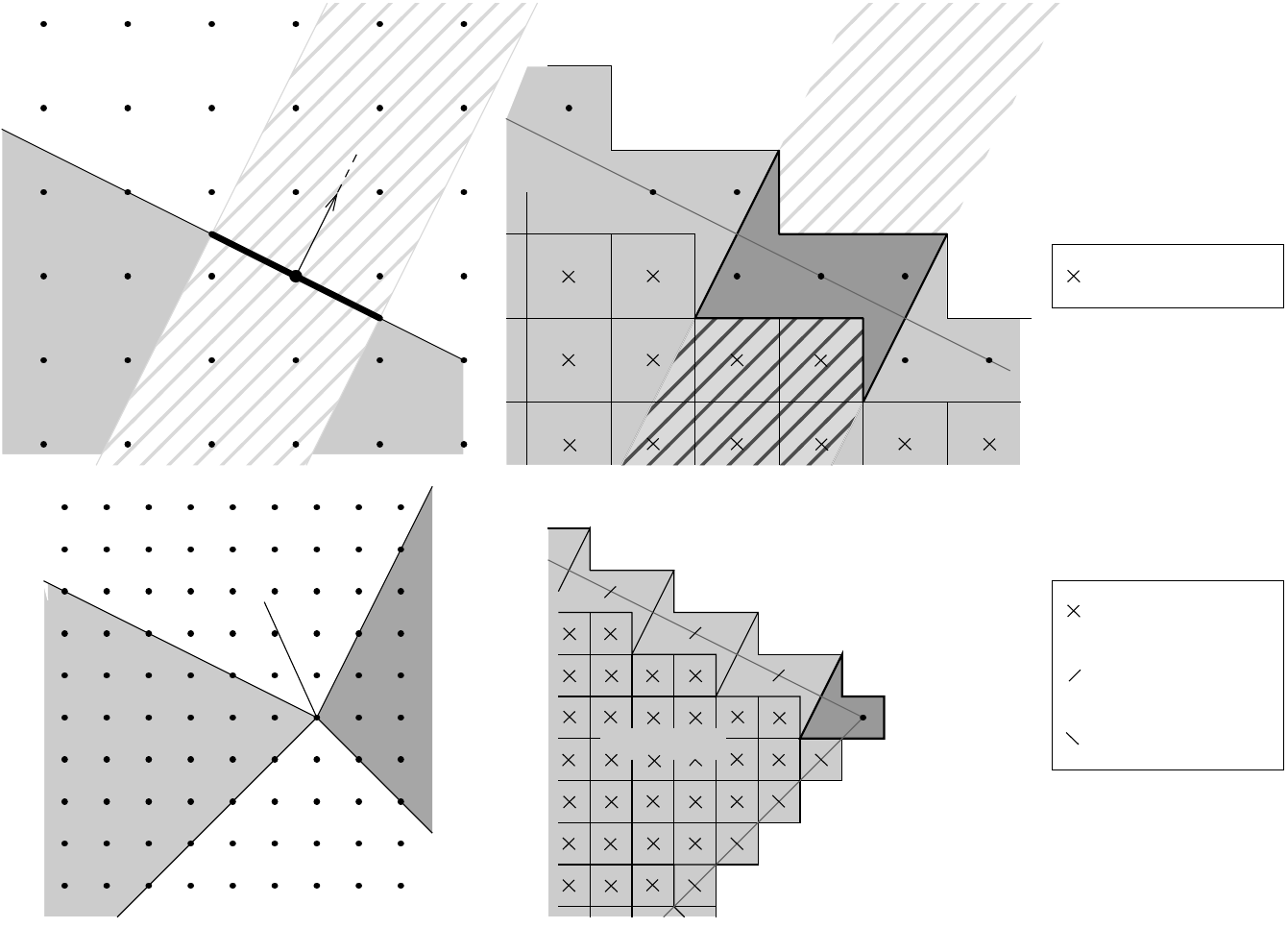_t}} 
\end{center}
\caption{Construction of the regions $R(N_{f_1})$ and $R(N_{v_2})$ for the 1-dim. cone $N_{f_1}$ (above) and the 2-dim. cone $N_{v_2}$ (below). 
}
\label{fig:ConstructRC}
\end{figure}

\end{Construction}

Note that $R(C)$ is a piecewise linear set, respectively a union of
finitely many convex polyhedra (a {\em polyhedral complex}).
Therefore the values $v_C$ and $w^C_K$ (cf. \eqref{Eqn:DefDCVolume} and \eqref{Eqn:DefCorrectionVolume})
in the definition of~$\mu(C)$
in~\eqref{Eqn:muIntro} are obtained from volume computations of
finitely many convex polyhedra. In a sense, we hereby also obtain a
kind of geometric interpretation for the Ehrhart coefficients.

%%%%%%%%%%%%%%%%%%%%%%%%%%%%%%%%%%%%%%%%%%%%%%%%%%%%%%%%%%%%%%%%%%%%%%%%%%%%%%%%%
%%%%%%%%%%%%%%%%%%%%%%%%%%%%%%%%%%%%%%%%%%%%%%%%%%%%%%%%%%%%%%%%%%%%%%%%%%%%%%%%%

\section{Computations, symmetry and Examples} \label{sec:compsym}
In this section we will go into computational considerations, give some easy examples that illustrate our construction of  local formulas and show how symmetries can
be taken advantage of.  

\subsection*{Computation}

Our constructions rely heavily on the computation of volumes. Given either a vertex- or a halfspace-representation of the polytope, the theoretical complexity of the \emph{volume computation problem} is known to be $\#P$-hard, see  \cite{Dyer}. The complexity is unknown in the case that both, the vertex- and the halfspace-representation are provided (cf. \cite{Bueeler}). Nevertheless, for practical applications there are a lot of elaborated implementations for computing volumes using triangulations and signed decompositions, 
respectively as in \cite{Cohen}, \cite{Lasserre}, \cite{Lawrence}. 
For a practical realization of our construction it is also necessary  
to compute shortest vectors, cf. \cite{Micciancio}.
At this point, we have a working prototype implementation in Sage \cite{Sage}. 
By default it computes values of $\mu$ based on Dirichlet--Voronoi cells (definition see Section Symmetry below) 
and performs well up to dimension 4. 
The program does not yet fully utilize the optimized algorithms and is handling the operations on 
 non-convex regions in a rather naive way, such that there is a lot of room for improvement. 
As mentioned before, a beneficial property of our construction is that it does not rely on simplicial
(cf. \cite{PommersheimThomas}) or even unimodular triangulations of cones (cf.~\cite{BerlineVergne}).  
However, for the construction of %Pommersheim and Thomas \cite{PommersheimThomas} and 
Berline and Vergne \cite{BerlineVergne}, it can be shown that the $\mu$-values   
can be computed in polynomial time with respect to the dimension $d$ of the polytope as long as 
the codimension $\dim(P)-\dim(f)$ for the face $f$ is fixed (cf.~\cite{BarvinokBook}). 
As our construction is based on volume computations of $d$-dimensional polytopes we do not expect such a complexity result to hold.

\begin{figure}[h]
\begin{center}
\scalebox{0.38}{\input{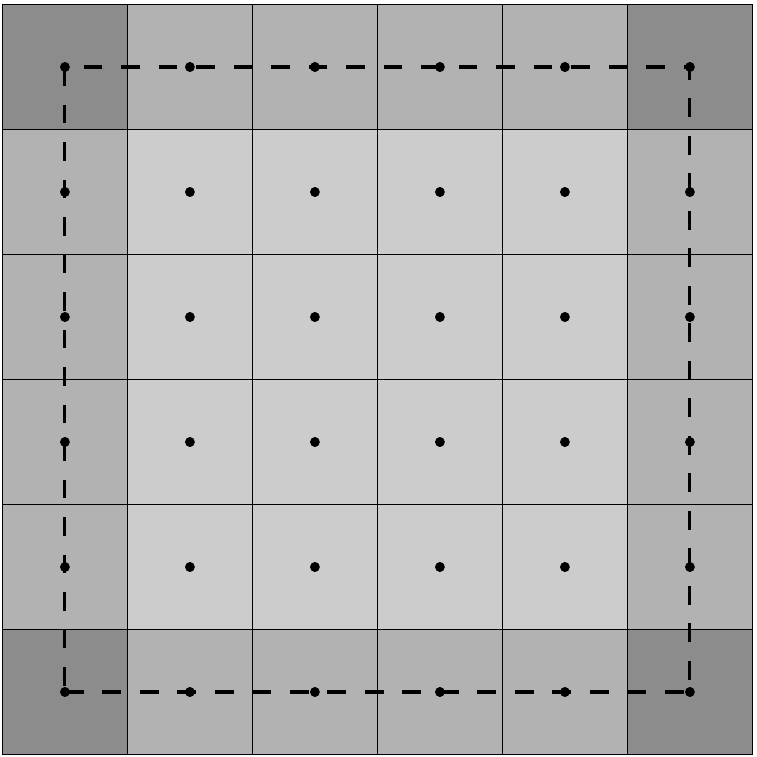_t}}  \qquad 
\scalebox{0.38}{\input{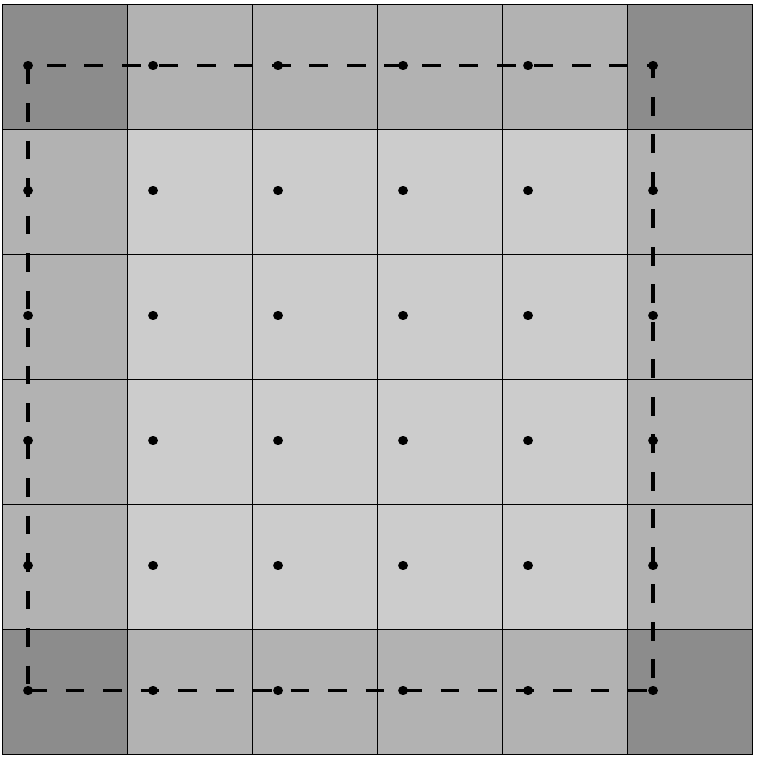_t}}  \qquad 
\scalebox{0.2}{\input{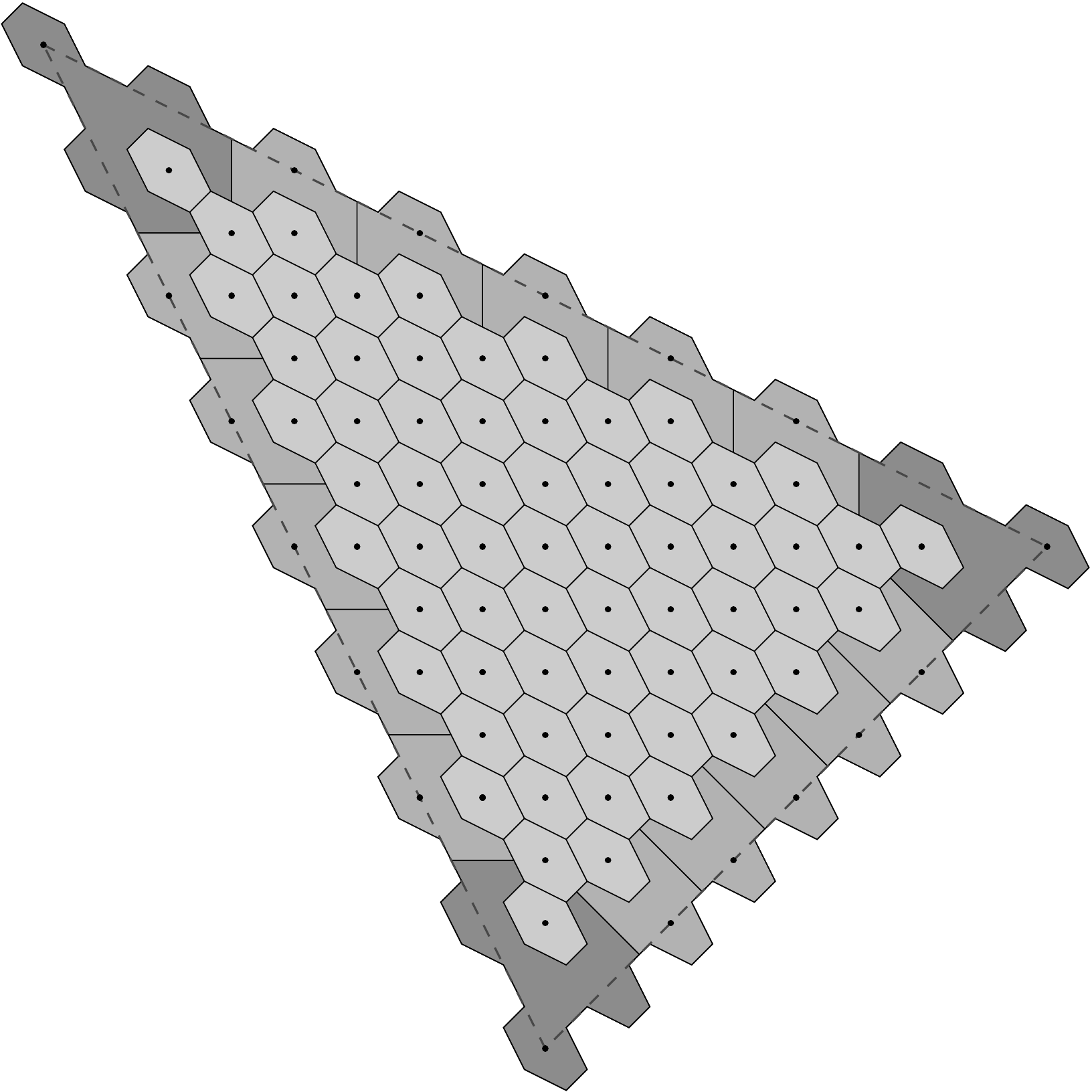_t}}
\end{center}
\caption{Tiling by regions from Examples \ref{Ex:SquareStandard}, \ref{Ex:SquareNonStandard} and \ref{Ex:TriangleNonStandard}, respectively.}
\label{fig:Tilings}
\end{figure}

In the following examples, a lattice polytope $\mP$ is given as the convex hull of specific lattice points. But since translation by lattice points and dilation by a positive integer do not change the values of $\mu$, we will not show a coordinate system in our figures. In fact, since the tiling from Theorem \ref{Thm:TilingIntro} demands a certain dilation and since the structure of the tiling becomes clearer for larger dilations, in the following examples we  give all figures of $\mP$ dilated by a factor of at least~$3$. 

\begin{Ex} \label{Ex:SquareStandard}
Let $V=\R^2$, $\Lambda=\Z^2$ and
$\mP=H:=\conv\{(0,0),(1,0),(1,1),(0,1)\}$, the unit square. Let
$\langle\cdot , \cdot\rangle$ be the standard inner product.  We take
a  Dirichlet--Voronoi cell  as a fundamental domain of $\Z^2$ and also
of all sublattices. Then $D:=\DV(\Z^2, \langle\cdot , \cdot\rangle)$ is a square as shown on  the left of Figure~\ref{fig:Voronoi}  and for a one dimensional subspace $U\subseteq\R^2$, $D(U):=\DV(U\cap \Z^2, \langle\cdot , \cdot\rangle)$ is
the segment with vertices being midpoints 
between the origin and its two neighboring lattice points. From Theorem \ref{Thm:TilingIntro} we get  the regions as shown on the left of Figure \ref{fig:Tilings}. Since the symmetries of the Dirichlet--Voronoi cell  and of $H$ are the same, we get the same values for all faces with the same dimension for reasons that are discussed below. 
The values of $\mu$ for the normal cones $N_f$ of faces $f\leq H$  are then:
\begin{equation}
\begin{array}{ c | c c c c c c c  }			
\dim(f) & 2 & 1 & 0 \\
  \hline
  \mu(N_f) & 1 & \frac{1}{2}  & \frac{1}{4}  \\
\end{array}
\end{equation}
That yields the Ehrhart coefficients $e_2 = 1$, $e_1=  2$ and $e_0=1$.
\end{Ex}

\begin{Ex} \label{Ex:SquareNonStandard}
As above, let $V=\R^2$, $\Lambda=\Z^2$ and $\mP=H$ be the unit square. We again take the standard inner product, but as a fundamental domain we take $D_\eta$ instead of $D$, which is  a translate by $(\eta, 0)$ of the fundamental domain from the previous example, $D_\eta:=D+(\eta,0)$, where $\eta \in \R$ is any real number with $-\frac{1}{2}<\eta<\frac{1}{2}$. For the sublattice $\lin((0,1))\cap \Z^2$ we take the usual Dirichlet--Voronoi cell and for the sublattice $\lin((1,0)) \cap \Z^2$ we again take the translate of the Dirichlet--Voronoi cell by $(\eta,0)$. We then get a tiling by regions as shown in the middle of Figure \ref{fig:Tilings}. The resulting  (possible irrational) values for $\mu$ are: 
\begin{equation}
\begin{array}{ l || c | c | c | c | c   }			
  \textnormal{face } f & T & f_1 , f_2 & f_3 , f_4 & v_1 , v_2 & v_3 , v_4 \\
  \hline
  \mu(N_f) & 1 & \frac{1}{2}+\eta  & \frac{1}{2}-\eta  &  \frac{1}{4}+\frac{1}{2}\eta  &  \frac{1}{4}-\frac{1}{2}\eta  \\
\end{array}
\end{equation}
This example shows that irrational values are actually possible, showing a difference to all previous constructions of local formulas which have rational values only.
\begin{remark}
It is actually possible to drop the assumption made in Section~\ref{sec:Regions}, that 0 is contained in each fundamental domain. The parts that change in the proofs are mainly that  $\X(tf)$ is not necessarily contained in $tf$, but only in the affine span $\aff(tf)$ and the chosen radii in the proof of Lemma~\ref{Lm:bounded} might get larger, but are still finite.

The values for $\mu$ in Example~\ref{Ex:SquareNonStandard} are
therefore also applicable for $\eta \in \R$ arbitrary, which in particular allows the values to be negative even for the easy example of the unit square. 
\end{remark}

\end{Ex} 

\begin{Ex} \label{Ex:TriangleStandard}
Let $V=\R^2$, $\Lambda=\Z^2$ and $\mP=S$, the triangle (simplex) shown on the left of Figure~\ref{fig:Basics}. We can think of $S$ as the triangle that is the convex hull of the vertices $v_1=(1,0)$, $v_2=(2,1)$ and $v_3=(0,2)$.
On the right hand side of Figure~\ref{fig:Tiling&DC}, one can see the tiling of $4\cdot S$ by regions that we get if we choose the Dirichlet--Voronoi cell with respect to the standard inner product in $\R^2$.  Figures \ref{fig:vN} and \ref{fig:w} show some values for the \vDC and the \w which determine the values for~$\mu$:
\begin{equation}
\begin{array}{ l | c c c c c c c  }			
  \textnormal{face } f & S & f_1 & f_2 & f_3 & v_1 & v_2 & v_3 \\
  \hline
  \mu(N_f) & 1 & \frac{1}{2} &  \frac{1}{2} & \frac{1}{2} & \frac{3}{8} & \frac{3}{8} & \frac{1}{4}  \\
\end{array}
\end{equation}

\subsection*{Symmetry}
In Example \ref{Ex:SquareStandard} it was possible to use the standard inner  product and get the same values for each face in the same dimension. This principle can easily be generalized using suitable Dirichlet--Voronoi cells.

Let $\mP$ be a lattice polytope and $\G$ a subgroup of all lattice symmetries of $\mP$, i.e. $\G$ is a finite matrix group with $A\cdot \mP:=\{A\cdot x: x\in \mP\}=\mP$ and $A\cdot \Lambda = \Lambda$ for all $A\in \G$. Then we can define a $\G$-invariant inner product by taking 
\begin{align}
\langle x,y\rangle_\G :=x^t G y & & \textnormal{ for all } x,y\in V,
\end{align}
with the Gram matrix  $G$ given by
\begin{equation}
G:= \frac{1}{|\G|} \sum_{A\in \G} A^tA.
\end{equation}
Let $\|\cdot\|_\G$ be the induced norm and
let $D$ be the Dirichlet--Voronoi cell for $\Lambda$ given by the inner product,
\begin{equation*}
D:=\DV(\Lambda, \langle\cdot,\cdot\rangle_\G)=\{x\in V : \|x\|_\G \leq \|x-p\|_\G \textnormal{ for all } p\in \Lambda\}. 
\end{equation*}
\end{Ex}

Then $D$ is invariant under the action of $\G$:
Let $x\in D$, then for $A\in \G$ we have
\begin{align*}
\|Ax\|_\G = \|x\|_\G \leq \| x- p\|_\G = \| A x - A p\|_\G & &\textnormal{ for all } p \in \Lambda.
\end{align*}
Since $ A \Lambda = \Lambda$, we get $AD\subseteq D$ for all $A\in \G$. Substituting $A$ by $A^{-1}$, we get $A^{-1} D\subseteq  D $ which yields $ D\subseteq AD$ and hence $AD=D$.
Similarly, we see that for all faces $f$ in the same $\G$-orbit the
normal cones and Dirichlet--Voronoi cells in $\Lambda \cap N_f^\perp$
are mapped onto each other. Hence, the used regions are invariant under the action of $\G$ and $\mu$ is constant on $\G$-orbits.
%invariant inner product $->$ regions are mapped to regions, normal cones to normal cones and tadaaa: Same value for all of them -- for free  :-P

\begin{Ex} \label{Ex:TriangleNonStandard}
Again, let  $V=\R^2$, $\Lambda=\Z^2$ and $\mP=S$ as in the previous example. One might notice that $S$ is invariant under the action of the group $\G=\langle \begin{psmallmatrix} 0&1\\-1&-1\end{psmallmatrix}\rangle$ of order 3. 
An invariant  inner product  $\langle x,y\rangle_\G = x^tGy$ is defined by the Gram matrix $G=  \begin{psmallmatrix}2&1\\1&2\end{psmallmatrix}$. The Dirichlet--Voronoi cell corresponding to $\langle \cdot,\cdot \rangle_\G $ is the hexagon shown on the right of Figure \ref{fig:Voronoi} and the resulting tiling is given on the right of Figure \ref{fig:Tilings}. Since all faces of the same dimension lie in the same orbit, the computation of $\mu$ can be reduced:
\begin{equation*}
\begin{array}{ c | c c c   }			
\dim(f) & 2 & 1 & 0 \\
  \hline
  \mu(N_f) & 1 & \frac{1}{2}  & \frac{1}{3} 
\end{array}
\end{equation*}
\end{Ex}

%%%%%%%%%%%%%%%%%%%%%%%%%%%%%%%%%%%%%%%%%%%%%%%%%%%%%%%%%%%%%%%%%%%%%%%%%%%%%%%%%%%
%%%%%%%%%%%%%%%%%%%%%%%%%%%%%%%%%%%%%%%%%%%%%%%%%%%%%%%%%%%%%%%%%%%%%%%%%%%%%%%%%%%

\section{Proof of Theorem \ref{Thm:TilingIntro} (Tiling)} \label{sec:Tiling}

Given a full dimensional lattice polytope $\mP$, we can choose fundamental domains for all relevant sublattices of $\Lambda$ and construct all regions $R(N_f)$ for the normal cones $N_f$ of  faces $f\leq \mP$. 
We want to show that it is possible to take translated copies of the regions to form a tiling of  $\upper \mP$, 
where the region $R(N_f)$ is translated by lattice points $x$ in a dilation of the face $f$. The set $\X(tf) \subseteq tf \cap \Lambda$ of these lattice points in $tf$ for some sufficiently large integer $t\in \Zg$ is yet to be defined, it stands in strong relation to the sets $X^C_K$ that we used in the construction of regions in Section \ref{sec:Regions}.  
The aim of this section is to find the right definition of the sets $\X(tf)$ and to prove Theorem \ref{Thm:TilingIntro}, which we recall here:

\Tiling*

For an example of such a tiling, see the right of Figure \ref{fig:Tiling&DC}.  
An intermediate result of the proof is the following: If we start with just one pointed rational cone $C$, we get a tiling of  $\upper {C^\vee}$ by taking translates of the regions $R(K)$ for all $K\leq C$.    This result is given in Lemma \ref{Lm:TilingOneCone}. But before we start with that, we need another technical observation about a certain periodicity in the construction of the regions, namely Lemma \ref{Lm:Periodicity}. 

As in the preceding section, for a pointed rational cone $C$, we write $L(C):=\Lambda \cap C^\perp$ for the induced lattice in the orthogonal space of $C$ and let $T(C)$ be an arbitrary but fixed fundamental domain of $L(C)$.

\begin{lemma} \label{Lm:Periodicity}
Let $C$ be a pointed rational cone in $V$. Then for all $y\in L(C)$ we have
\begin{equation*}
y+\left(V\backslash \bigcup_{K<C} \left(  X_K^C+R(K)\right)\right) = V\backslash \bigcup_{K<C} \left(   X_K^C +R(K)\right).
\end{equation*}
In other words, the construction of the region $R(C)$ in Equation (\ref{Eqn:region}) is  invariant under translation of points in the lattice $L(C)$. 
\end{lemma}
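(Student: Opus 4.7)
The plan is to reduce the statement to showing that the set
$A:=\bigcup_{K<C}(X_K^C+R(K))$ is invariant under translation by every $y\in L(C)$. Since complementation commutes with translation by a fixed vector, the claimed equality $y+(V\setminus A)=V\setminus A$ is then immediate. In turn, $y+A=A$ will follow once I show $y+X_K^C=X_K^C$ for each face $K<C$ and each $y\in L(C)$.

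The backbone of the argument is the elementary inclusion that whenever $K\leq C$, one has $K\subseteq C$, hence $C^\perp\subseteq K^\perp$ and therefore $L(C)\subseteq L(K)$. In particular, for $x\in X_K^C\subseteq L(K)$ and $y\in L(C)$, the translate $x+y$ still lies in $L(K)$, so it is eligible to be tested against the defining conditions of $X_K^C$. I would then check that both defining properties of $X_K^C$ are preserved by translation by~$y$. For condition~(\ref{Ppt:Inside}), the key point is that for any ray $M\leq C$ the polar cone $M^\vee$ is invariant under addition of vectors from its lineality space $M^\perp$, since adding a vector orthogonal to $M$ does not change inner products with elements of~$M$. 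As $M\subseteq C$ forces $C^\perp\subseteq M^\perp$, every $y\in L(C)$ satisfies $y+\Int(M^\vee)=\Int(M^\vee)$, so $(x+y)+R(K)\subseteq \Int(M^\vee)$ whenever $x+R(K)\subseteq \Int(M^\vee)$.

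For condition~(\ref{Ppt:Nonintersect}), I would use the same inclusion $L(C)\subseteq L(K')$ for each $K'<C$ to observe that the map $x'\mapsto x''\!:=x'-y$ is a bijection of $L(K')$ onto itself. Under this substitution the disjointness $(x+y+R(K))\cap (x'+R(K'))=\varnothing$ for all $x'\in L(K')$ becomes $(x+R(K))\cap(x''+R(K'))=\varnothing$ for all $x''\in L(K')$, which is exactly property~(\ref{Ppt:Nonintersect}) for $x$ itself. Thus $y+X_K^C\subseteq X_K^C$; applying the same reasoning with $-y\in L(C)$ yields equality. Unioning over $K<C$ delivers $y+A=A$ and completes the proof.

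There is no real technical obstacle here; the whole proof is a bookkeeping exercise built on the single observation that faces of~$C$ sit inside~$C$, which translates into compatibility of orthogonal complements, polar cones, and induced lattices. The only step that might be worth spelling out is the identity $M^\vee+M^\perp=M^\vee$, but it follows directly from the definition of the polar cone.
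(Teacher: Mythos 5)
Your proof is correct and follows essentially the same route as the paper's: reduce to the invariance $y+X_K^C=X_K^C$, verify that condition (\ref{Ppt:Inside}) is preserved because $C^\perp\subseteq M^\perp$ and $M^\vee+M^\perp=M^\vee$, verify condition (\ref{Ppt:Nonintersect}) via the bijection $x'\mapsto x'-y$ of $L(K')$ using $L(C)\subseteq L(K')$, and obtain the reverse inclusion by applying the argument to $-y$. No gaps.
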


\begin{proof}
Let $y\in L(C)$. Since translation by $y$ is a bijection, it commutes with unions and complements:
\begin{equation*}
y+\left(V\backslash \bigcup_{K<C} \left(  X_K^C +R(K)\right)\right)=\left(V\backslash \bigcup_{K<C} \left(   X_K^C +y+R(K)\right)\right)
\end{equation*} 
In order to prove the lemma, we only need to show that $  X_K^C$ is invariant under translation by $y$, i.e. $y+  X_K^C= X_K^C$ for all faces $K<C$.

Let $   K<C$ be a face of $C$ and $x\in    X_K^C=   X_K^C$, i.e. $x$ meets conditions (\ref{Ppt:Inside}) and  (\ref{Ppt:Nonintersect}). 
We want to show that $x+y\in X_K^C$. 
Since $y \in C^\perp$ and  $C^\perp + M^\vee = M^\vee$,  for all rays $M<C$, we have  
\begin{align*}
 x+R(K) \subseteq \Int(  M^\vee) \, \Leftrightarrow \, x+y+R(K) \subseteq \Int( M^\vee)
\end{align*}
which in particular  means that $x+y$ meets condition (\ref{Ppt:Inside}).

Now let $K'<C$ be a face of $C$ such that $K$ and $K'$ are incomparable and let $x'\in L(K')$ (cf. condition (\ref{Ppt:Nonintersect})). Since $y\in L(C)\subseteq L(K')$, we get:
\begin{align*}
\left(x+y+ R(K)   \right) \cap \left(x'+R(K')  \right) & =   \left( x+y+R(K)   \right) \cap \left(x'+y-y+R(K')  \right) \\
			& =y+  \left( \left(x+ R(K)    \right) \cap ( \underbrace{x'-y}_{\in L(K')} )+R(K')  \right) \\
			&= \; \varnothing.
\end{align*}
The last step follows from the fact that $x$ meets Condition (\ref{Ppt:Nonintersect}) and the whole equation shows that $x+y$ also meets condition (\ref{Ppt:Nonintersect}).
Altogether, we have shown that $x+y \in X_K^C$ for all $x\in  X_K^C$ and all $y\in L(C)$. That means $y+ X_K^C \subseteq  X_K^C$. Conversely, let $x\in  X_K^C$, then $x=x-y+y$. As with $y\in L(C)$ we also have $-y\in L(C)$, we can use that $x-y \in  X_K^C$ and hence  $x\in  y+X_K^C$. We thus get $ X_K^C \subseteq  y+X_K^C$ which finishes the proof.
\end{proof}

\begin{lemma} \label{Lm:TilingOneCone}
For any pointed  rational cone $C$ we have a tiling
\begin{align*}
\{x+R(C) : \  x\in L(C)\} \ \cup \ \{x+ R(K): \ K<C, \ x\in X^C_K\}
\end{align*} 
of  $\upper \mP$ consisting of lattice point translates of regions corresponding to $C$ and its faces.
\end{lemma}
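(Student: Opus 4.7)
Reading the statement, the set being tiled should be $\upper{C^\vee}$ (the symbol $\mP$ here appears to carry over from the surrounding discussion). The plan is to proceed by induction on $\dim(C)$. The base case $C=C_0=\{0\}$ is immediate: $R(C_0)=T(C_0)$ is a fundamental domain of $L(C_0)=\Lambda$, we have $\upper{C_0^\vee}=V$, and $\Lambda$-translates of $T(C_0)$ tile $V$ by definition.

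For the inductive step I first establish covering. The slabs $y+(T(C)+\lin(C))$, $y\in L(C)$, tile $V$ because $V=\lin(C)\oplus C^\perp$ and $T(C)$ is a fundamental domain of $L(C)$ in $C^\perp$. Moreover $\upper{C^\vee}$ is $L(C)$-invariant, since $L(C)\subseteq C^\perp$ acts trivially on inner products against $C$, so $C^\vee+z=C^\vee$ for $z\in L(C)$, and this lifts to the covering domain complex. Given $p\in\upper{C^\vee}$, I would pick the unique $y\in L(C)$ with $p-y\in T(C)+\lin(C)$; then $p-y$ still lies in $\upper{C^\vee}$, and Equation~(\ref{Eqn:region}) forces $p-y$ into either $R(C)$ or $X_K^C+R(K)$ for some face $K<C$. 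In the latter case, writing $p-y=x+r$ and invoking the $L(C)$-invariance of $X_K^C$ established in the proof of Lemma~\ref{Lm:Periodicity}, I conclude $y+x\in X_K^C$ and $p\in(y+x)+R(K)$.

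For pairwise disjointness I will treat five cases. \textbf{(i)} Distinct $L(C)$-translates of $R(C)$ sit inside disjoint slabs. \textbf{(ii)} For $y\in L(C)$ and $x\in X_K^C$, the set $y+R(C)$ is disjoint from $x+R(K)$ by the defining formula for $R(C)$, made $L(C)$-translation-invariant by Lemma~\ref{Lm:Periodicity}. \textbf{(iii)} For $K<C$ and distinct $x,x'\in X_K^C\subseteq L(K)$, the inductive hypothesis applied to $K$ yields a tiling of $\upper{K^\vee}$ whose $L(K)$-translates of $R(K)$ are pairwise disjoint. \textbf{(iv)} For incomparable $K,K'<C$, the disjointness of $x+R(K)$ and $x'+R(K')$ with $x\in X_K^C$, $x'\in X_{K'}^C\subseteq L(K')$ is exactly condition~(II) of the definition of $X_K^C$. \textbf{(v)} For nested faces $K<K'<C$, every ray $M\leq K'$ (respectively every face $M<K'$) is also a ray (respectively face) of $C$, which immediately gives $X_K^C\subseteq X_K^{K'}$. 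Thus $x+R(K)\subseteq\bigcup_{M<K'}(X_M^{K'}+R(M))$ for $x\in X_K^C$, while $x'+R(K')$ with $x'\in X_{K'}^C\subseteq L(K')$ avoids this union by Equation~(\ref{Eqn:region}) applied to $K'$, combined with Lemma~\ref{Lm:Periodicity} for $K'$.

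The main technical step is case~(v), which requires tracing through the inductive definition of $R(K')$ together with the (essentially definitional) inclusion $X_K^C\subseteq X_K^{K'}$. The remaining cases are essentially bookkeeping from the defining equations and the periodicity lemma.
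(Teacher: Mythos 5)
Your proposal is correct and follows essentially the same route as the paper: the covering via the defining equation for $R(C)$ together with the $L(C)$-periodicity of Lemma~\ref{Lm:Periodicity} (the paper packages this as the identity $L+(A\cap B)=A$), disjointness for incomparable faces via Property~(\ref{Ppt:Nonintersect}), and for nested faces via the inclusion $X_{K}^{C}\subseteq X_{K}^{K'}$ combined with the defining equation applied to $K'$. You also correctly read $\upper{\mP}$ in the statement as $\upper{C^\vee}$, and your key inclusion in case~(v) is exactly the (typo-garbled) inclusion the paper invokes.
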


\begin{proof}
In general, for a lattice $L$ in $V$ (not necessarily of full rank) and subsets $A, B\subseteq V$ with the properties that $B+L=V$ and $A+L=A$ we have that
\begin{equation}\label{Eqn:L+AB}
L+(A\cap B)=A.
\end{equation}
We show both inclusions:
\begin{align*}
\subseteq :\; \; \; & L+(A\cap B) \subseteq L+A=A. \\
\supseteq: \; \; \; &\nn{Let }  x\in A.  \nn{ Since } V=L+B, \nn{ we can write } x=l+b \nn{ with } l\in L \nn{ and } b\in B. \\
&\nn{Then } b=x-l\in A+L=A\nn{ and hence }  x\in L+(A\cap B).
\end{align*}

We have that  $C^\vee$ is invariant under $C^\perp$, thus so is $\upper {C^\vee}$.

From (\ref{Eqn:L+AB}) and Lemma \ref{Lm:Periodicity} we deduce

\begin{align} 
\begin{split} 
\label{Eqn:L+R}
  L(C)+R(C)  & =\, \underbrace{L(C)}_{L} + \left[ \underbrace{ \left( V \backslash \bigcup_{K < C}(  X_K^C+ R(K))\right)\cap \upper {C^\vee}}_{A} \cap 
\underbrace{ \left( T(C)+\nn{lin}(C)\right)}_{B} \right] \\
& = \; \;  \left( V \backslash \bigcup_{K < C}(  X_K^ C+ R(K)) \right)\cap \upper {C^\vee} .
\end{split}
\end{align}

Thus, we get

\begin{align*}
 \upper \mP  = \left( L(C)+R(C) \right) \cup \left( \bigcup_{K<C} \left(    X_K^C + R(K) \right)\cap \upper {C^\vee}\right).
\end{align*}

Since $R(C)\subseteq T(C)+\lin(C)$, we know that  the translates of $R(C)$ by points in $L(C)$ do not intersect. 
For each $K<C$, the set $  X_K^C$ is a subset of $L(K)$, so the same argument shows that the sets $\{x+R(K): \ x\in   X_K^C\}$ have pairwise empty intersections. 
Now we only need to show that for two faces $K, K'<N$ the sets of the form $x+R(K)$ and $y+R(K')$ with $x\in X_K^C$ and $y \in  X_{K'}^C$ do not intersect. 
If $K$ or $K'$ is a face of the other one, say $K'<K$, we have

\begin{align*}
X_K^C+R(K) 
		& \subseteq L(K) +R(K) \\
		& \overset{(\ref{Eqn:L+R})}{=} V \backslash \left(  \bigcup_{M < K}(  X_M^K+ R(M))\right) \\
		&  \subseteq V\backslash \left(  X_{K'}^K+R(K') \right)  \\
		& \subseteq V\backslash \left(X_{K'}^C+R(K')\right).
\end{align*}
The last   inclusion follows from the property that  $X^K_{K'} \subseteq  X^K_{K'}$  whenever $K'<K<C$. 
If $K$ and $K'$ are incomparable in the face lattice, then $X_K^C+ R(K)$ and $X_{K'}^C+R(K')$ do not intersect by construction of $X_K^C$, Property \ref{Ppt:Nonintersect}.
\end{proof}

For a pointed rational cone $C$, Lemma \ref{Lm:TilingOneCone} yields a tiling of  $\upper C$ into copies of translated regions $R(K)$, for $K\leq C$. 
In particular, given a full dimensional lattice polytope $\mP$, we have a tiling of  $\upper \mP$  for each normal cone $N_{v}$ with $v$ a vertex of $\mP$. 
In Figure \ref{fig:TriTilingOneCone}, these tilings are given for the triangle $S$ that is shown in Figure \ref{fig:Basics} and was formally introduced in Section \ref{sec:compsym}. 
Comparing these tilings to the tiling in Figure \ref{fig:Tiling&DC} on the right, which we want to construct for Theorem \ref{Thm:TilingIntro}, one might already get an idea of how to achieve this goal: 
For all vertices $v$ of $\mP$, we take the tilings from Lemma
\ref{Lm:TilingOneCone} applied to all $N_v$ and translate each by
$tv$. In this joint tiling, we disregard all translates of regions $R(N_f)$, with $f\leq \mP$ not a vertex, that do not fit.

\begin{figure}[h]
\begin{center}
\scalebox{0.78}{\input{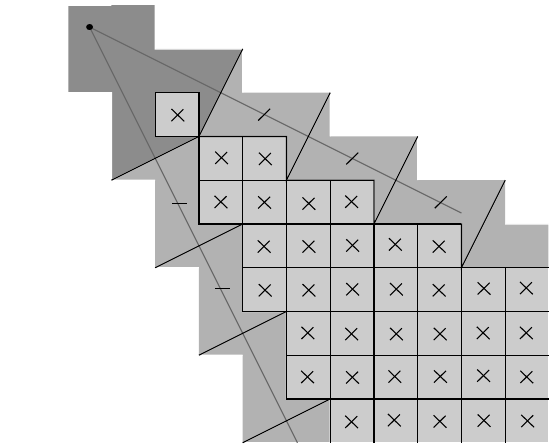_t}} 
\hspace{-8pt}\scalebox{0.78}{\input{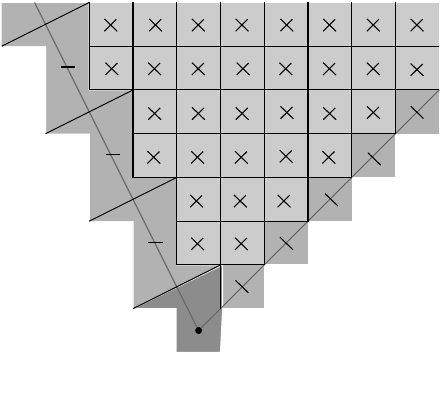_t}} 
\hspace{6pt}\scalebox{0.78}{\input{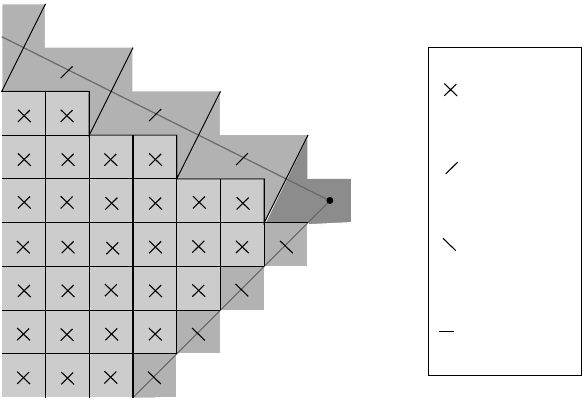_t}} \\
\vspace{5pt}\hspace{8pt}\scalebox{0.78}{\input{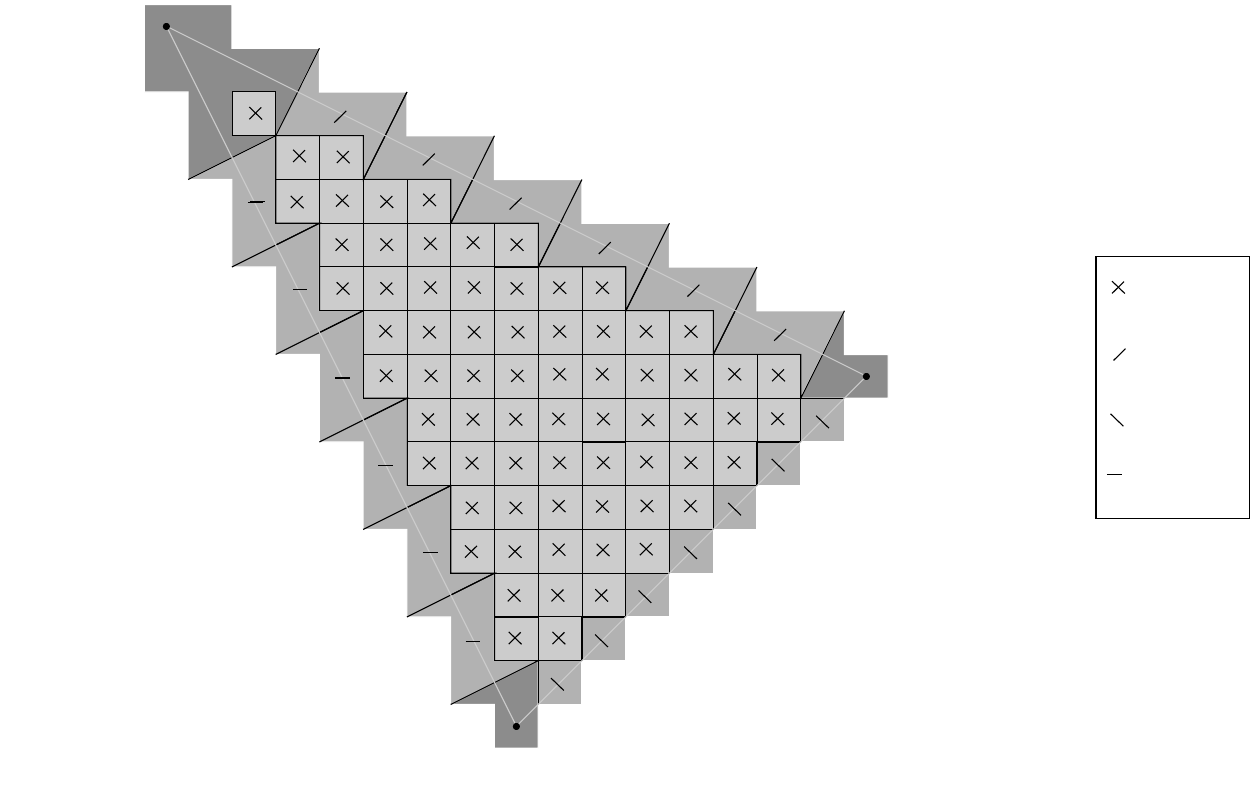_t}} 
\end{center}
\caption{  Tiling for the triangle $tS$ according to Theorem \ref{Thm:TilingIntro}  as a connection of the tilings for its normal cones (Lemma \ref{Lm:TilingOneCone})  % (here, $t=4$)
}
\label{fig:TriTilingOneCone}
\end{figure}

Hence, for a face $f\leq \mP$, the correct way of defining $\X(tf)\subseteq \Lambda \cap tf$, the \emph{set of all feasible lattice points in $tf$}, is the following:
\begin{align} \label{Eqn:feasible}
\X(tf):=  \bigcap_{v \textnormal{ vertex of } f} X^{N_v}_{N_f} +tv.
\end{align}
By setting $X^{N_v}_{N_v}=\{0\}$ for a vertex $v$ of $\mP$, this definition is also valid for $\X(tv)$ and yields $\X(tv)=tv$ as desired. 

Before we start with the proof of Theorem \ref{Thm:TilingIntro}, we need the following lemma:

\begin{lemma} \label{Lm:bounded}
Let $C$ be a pointed rational cone in $V$ and $R(C)$ the corresponding region. Then   $R(C)$ is bounded. 
\end{lemma}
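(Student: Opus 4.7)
I would prove the lemma by induction on $\dim(C)$. The base case $C = C_0 = \{0\}$ is immediate, since $R(C_0) = T(C_0)$ is a bounded fundamental domain by assumption. Suppose now $\dim(C) \geq 1$ and $R(K)$ is bounded for every face $K < C$. By Equation~(\ref{Eqn:region}), $R(C) \subseteq T(C) + \lin(C)$, and since $T(C) \subseteq C^\perp$ is bounded, $R(C)$ is already bounded in the directions transverse to $\lin(C)$. Letting $\pi : V \to \lin(C)$ be the orthogonal projection, it therefore suffices to prove that $\pi(R(C))$ is a bounded subset of $\lin(C)$.

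The key is to exploit that $C^\ast := C^\vee \cap \lin(C)$ is a pointed polyhedral cone of full dimension in $\lin(C)$, because $C$ is pointed and spans $\lin(C)$. For every $x = s + t \in R(C)$ with $s \in \lin(C)$ and $t \in T(C)$, the fact that $x \in \upper{C^\vee}$, combined with $t \perp C$, forces $s$ to lie within a uniformly bounded distance of $C^\ast$ in $\lin(C)$. The heart of the proof is then the following claim: there exists $\rho > 0$ such that if $\|s\| \geq \rho$, the point $x = s + t$ lies in $X^C_K + R(K)$ for some face $K < C$, so that $x$ is removed in the construction of Equation~(\ref{Eqn:region}) and hence cannot be in $R(C)$. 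Granted this claim, $\pi(R(C))$ is contained in the closed ball of radius $\rho$, and $R(C)$ is bounded.

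To prove the claim, I would use the order-reversing face correspondence $K \mapsto K^\perp \cap C^\ast$ between the face lattices of $C$ and $C^\ast$. For $\|s\|$ large, the direction $s/\|s\|$ is asymptotically close to the relative interior of some face $F$ of $C^\ast$; let $K < C$ be the corresponding face of $C$. Letting $\lambda$ be the unique lattice point in $L(K)$ for which the $K^\perp$-projection of $x$ lies in $\lambda + T(K)$, I would verify, for $\|s\|$ sufficiently large, that: condition~(\ref{Ppt:Inside}) holds for $\lambda$ (namely $\lambda + R(K) \subseteq \Int(M^\vee)$ for each ray $M \le C$ with $M \not\le K$), because the asymptotic position of $s$ forces $\lambda$ to be deep in every such $\Int(M^\vee)$ while $R(K)$ is bounded by the inductive hypothesis; condition~(\ref{Ppt:Nonintersect}) for $\lambda$, again by boundedness of each $R(K')$ for $K'$ incomparable to $K$; and finally that $x - \lambda \in R(K)$, by the $L(K)$-periodicity of the removal set (Lemma~\ref{Lm:Periodicity}) together with Equation~(\ref{Eqn:region}) applied to $K$. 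The main obstacle will be this last step: the inductive sub-argument that $x - \lambda$ does not fall into $X^K_{K'} + R(K')$ for any $K' < K$, which requires carefully combining the asymptotic location of $s$ in $F$ with the boundedness of every lower-dimensional $R(K')$ supplied by the inductive hypothesis.
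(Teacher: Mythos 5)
Your overall strategy coincides with the paper's: induct on $\dim(C)$, and show that every point of the strip $\left(T(C)+\lin(C)\right)\cap\upper{C^\vee}$ that is far from $C^\perp$ is swallowed by some translate $x+R(K)$ with $x\in X^C_K$, hence removed in Equation~(\ref{Eqn:region}). However, your final step contains a genuine gap: you propose to prove that $x-\lambda\in R(K)$, i.e.\ that $x-\lambda$ avoids every removed set $X^K_{K'}+R(K')$ with $K'<K$. This is false in general. Already for a full-dimensional pointed cone $C$ in the plane with $K=M$ a ray: a point $p$ at moderate distance from $M^\perp$ on the interior side of $M^\vee$ lies in a translate $x'+T(C_0)$ that is entirely contained in $\Int(M^\vee)$, so that translate belongs to $X^M_{C_0}+R(C_0)$ and $p-\lambda\notin R(M)$. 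The statement you flag as ``the main obstacle'' is therefore not merely hard to prove; it is the wrong thing to prove. The paper's resolution is to accept both alternatives: by Lemma~\ref{Lm:TilingOneCone} applied to $K$, a bounded neighbourhood of $\lambda$ inside the strip and $\upper{K^\vee}$ is tiled by $R(K)$ together with translates $a+R(K_1)$, $a\in X^K_{K_1}$, $K_1<K$ (the set $U(K)$ in the paper's proof). Either $p\in\lambda+R(K)$, in which case one shows $\lambda\in X^C_K$, or $p\in\lambda+a+R(K_1)$, in which case one must additionally show $\lambda+a\in X^C_{K_1}$ --- the ``second step'' of the paper's proof. Without this second branch the argument does not close.

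A secondary issue is your selection of the face $K$ from the asymptotic direction $s/\|s\|$. Membership of that direction in the relative interior of a face $F=K^\perp\cap C^\ast$ does not control the actual distance $\dist(p,K^\perp)$: a point whose direction approaches a ray of $C^\ast$ while its distance to that ray tends to infinity must be treated as an interior point and covered by a translate of the fundamental domain $T(C_0)$, not by a translate of $R(K)$. The paper instead makes a clean dichotomy by distance --- either $\dist(p,K^\perp)\geq r_K'$ for every positive-dimensional $K<C$ (then a translate of $T(C_0)$ works), or one takes $K$ of maximal dimension with $\dist(p,K^\perp)<r_K'$ --- and then needs Lemma~\ref{Lm:Distance} and the quantitative radii $r_K'$, $u_K$, $h_C$ to verify conditions (\ref{Ppt:Inside}) and (\ref{Ppt:Nonintersect}). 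You should restructure the case analysis along these lines for the argument to go through.
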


This property is very important for the proof of Theorem \ref{Thm:TilingIntro}, since without $R(N_{v_i})$ being bounded, there is no chance to fit the tilings from Lemma~\ref{Lm:TilingOneCone} into one tiling. Though the statement of Lemma \ref{Lm:bounded} might seem rather apparent, its proof is quite technical. 
Assuming Lemma \ref{Lm:bounded} for the moment, we now give the proof of Theorem~\ref{Thm:TilingIntro}.
To shorten notation, we will write $R(f)$ and $X^f_g$ instead of $R(N_f)$ and $X^{N_f}_{N_g}$, respectively, but keep in mind that these sets do not depend on the faces, but only the normal cones. 

\begin{proof}[Proof of Theorem \ref{Thm:TilingIntro}]
Let $\mP$ be a full-dimensional lattice polytope with vertices $v_1,\dots,v_m\in\Lambda$.
We start by specifying $t_0$.
Lemma \ref{Lm:bounded} yields that $R(f)$ is bounded for each $f \leq \mP$. Thus, for any $f,g \leq \mP$ that do not intersect, there is a $t_{fg} \in \mathbb{Z}_{>0}$ such that $(R(f)+t_{fg}\cdot f) \cap (R(g)+t_{fg}\cdot g) = \varnothing$.  We set
\begin{equation*}
t_0=\max\{t_{fg} : f,g\leq \mP \textnormal{ and } f\cap g = \varnothing\}. 
\end{equation*} 
This yields 
 that $(R(f)+t\cdot f) \cap (R(g)+t\cdot g) = \varnothing$ for non-intersecting $f,g \leq \mP$ and all $t\geq t_0$.

First, we show that the translated regions
\begin{align} \label{Eqn:Tiling}
\{x+R(f) : \; f\leq \mP, \, x\in \X(tf)\}
\end{align}
are pairwise disjoint, and in a second step we prove that they cover the whole space. 
Let $f,g  \leq \mP$ be arbitrary faces of $\mP$, let $t\in \Zgeq$ with $t\geq t_0$ and let $x\in  \X(tf)$ and $y\in \X(tg)$. 
For $f \cap g = \varnothing$ it follows from the construction of $t_0$ that $(x+R(f)) \cap (y+R(g)) = \varnothing$. 
Otherwise, we find $j\in \{ 1, \dots, m\}$ such that $v_j\in f \cap g$. Then, since $x\in \X(tf)$ and $y\in \X(tg)$, we have $(x-  t v_j)\in X^{v_j}_f$ and $(y-  t v_j)\in X^{v_j}_g$. By Lemma \ref{Lm:TilingOneCone} for $N_{v_j}$  the sets $(x-t v_j)+R(f)$ and $(y-t  v_j)+R(g)$ do not intersect, which then also holds for the translations  $ x+R(f)$ and $ y+R(g)$.

It remains to show that (\ref{Eqn:Tiling}) is indeed a covering of  $\upper \mP$ . 
To this end, let $p\in  \upper \mP $  be an arbitrary point. Lemma \ref{Lm:TilingOneCone} yields that
\begin{align*} \label{k0Tiling}
 \upper {N_{v_i}^\vee} &= \bigcup_{\substack{f \leq \mP \\ v_i \in f}} \left(  X^{v_i}_f + t v_i+ R(f) \right)
\end{align*}
 for each vertex $v_i$ of $\mP$. 
Hence, for each $i\in \{1, \dots, m\}$ we find $f_i  \leq \mP$ with $v_i \in f_i$ and $x_i\in X^{v_i}_{f_i}$ such that $p \in (x_i +  t v_i+ R(f_i))$. 

Let $v_i$ be a vertex, such that $f_i$ is smallest in dimension. Without loss of generality we can assume  $i=1$, hence,  $p \in (x_1 + tv_1 + R(f_1))$.
We want to show that  $x_1 +tv_1\in \X(tf_1)$, because then $x_1+tv_1+R(f_1)$ is an element of the set in (\ref{Eqn:Tiling}) and contains $p$.

 Let's assume this is not the case. After possibly renumbering, we can assume that $(x_1 +t v_1) \notin (X^{v_2}_{f_1}+t v_2)$.  In particular, we have $v_2 \in f_1$. 

But then we can find $f_2  \leq \mP$ with $v_2 \in f_2$ and  $x_2 \in X^{v_2}_{f_2}$ such that $p\in (x_2+t v_2+R(f_2))$. 
This yields 
\begin{equation*} 
 p\in  \left(  x_2 + t v_2+ R(f_2) \right) \cap \left(  x_1 + t v_1+ R(f_1) \right),
\end{equation*}
and thus
\begin{align} \label{f1f2}
\left(  x_2 + R(f_2) \right) \cap ( \underbrace{ x_1 + t v_1 -  t v_2}_{\in L(f_1)}+ R(f_1) ) \neq \varnothing,
\end{align}
which contradicts $x_2\in  X^{v_2}_{f_2}$ by property (\ref{Ppt:Nonintersect}), unless $f_1$ and $f_2$ are comparable. 

The case $f_2 \subsetneq f_1$ is not possible, since $\dim(f_2)\geq \dim(f_1)$ by assumption on the minimality of the dimension of $f_1$. The case $f_1 = f_2$ is not possible either, since $x_2+tv_2+R(f_1)$ and $x_1+tv_1+R(f_1)$ can only intersect if $x_2+tv_2=x_1+tv_1$, in which case $x_1+tv_1\in (X^{v_2}_{f_2}+tv_2)$.
%which we assumed not to be the case$.

We are left with the case $f_1 \subseteq f_2$ to be excluded. We now can consider $X^{f_1}_{f_2}$ and have the inclusion $X^{v_2}_{f_2} \subseteq X^{f_1}_{f_2}$ (since we only add conditions when going from $X^{f_1}_{f_2} $ to $ X^{v_2}_{f_2}$). Since $x_2 \in X^{v_2}_{f_2}$, we also have $x_2 \in X^{f_1}_{f_2}$. Then 
the sets $ (x_1 + t v_1 -  t v_2+R(f_1))$ and $ \left(  x_2 + R(f_2) \right)$ are part of the tiling that we get by applying Lemma \ref{Lm:TilingOneCone}  to $N_{f_1}$. But,  as we see in equation (\ref{f1f2}), the two sets do intersect, which is a contradiction.
%Via exclusion we have thus shown that $p$ is covered by the tiling given in (\ref{Eqn:Tiling})  and since $p\in V$ was arbitrary, this proves that (\ref{Eqn:Tiling}) is also a covering and hence a tiling of~$V$.

\end{proof}

For proving Lemma \ref{Lm:bounded}, we use the following linear algebra fact by  which we can assume a certain distance between points on faces of a cone. Its proof is straightforward using standard arguments and is omitted here.

\begin{lemma}\label{Lm:Distance}
Let $V_1, V_2\subseteq V$ be subspaces of $V$ with intersection $U:=V_1\cap V_2$. Then for all $r>0$ there exists $\delta>0$ such that for all $x\in V_1$
\begin{align*}
\dist(x,U)
> \delta \;\;\Rightarrow \;\; \dist(x,V_2)
>r.
\end{align*}
\end{lemma}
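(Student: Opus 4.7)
\textbf{Proof plan for Lemma \ref{Lm:Distance}.} The strategy is to reduce everything to a compactness argument on the unit sphere of the orthogonal complement of $U$ inside $V_1$. Concretely, set $W := V_1 \cap U^\perp$, so that $V_1 = U \oplus W$ is an orthogonal decomposition. Every $x \in V_1$ can then be written uniquely as $x = u + w$ with $u \in U$ and $w \in W$. Since $U$ is a linear subspace, $\dist(x, U) = \|w\|$. Moreover, as $u \in U \subseteq V_2$, orthogonal projection onto $V_2$ fixes $u$, giving $\dist(x, V_2) = \dist(w, V_2)$. Hence the claim reduces to showing: there is a constant $c > 0$ such that $\dist(w, V_2) \geq c\,\|w\|$ for every $w \in W$.

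To establish this, consider the continuous function $\varphi : W \to \R$ defined by $\varphi(w) := \dist(w, V_2)$. Since $\varphi$ is positively homogeneous of degree $1$, it suffices to prove $\varphi(w) \geq c$ on the unit sphere $S := \{w \in W : \|w\| = 1\}$. The sphere $S$ is compact (as $W$ is finite-dimensional), so $\varphi$ attains its minimum on $S$ at some $w_0 \in S$. If this minimum were $0$, then $w_0 \in V_2$, and since $w_0 \in W \subseteq V_1$, this would force $w_0 \in V_1 \cap V_2 = U$. But also $w_0 \in W \subseteq U^\perp$, so $w_0 = 0$, contradicting $\|w_0\| = 1$. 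Therefore $c := \min_{w \in S} \varphi(w) > 0$.

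Given $r > 0$, set $\delta := r/c$. For $x \in V_1$ with decomposition $x = u + w$, the hypothesis $\dist(x, U) > \delta$ reads $\|w\| > \delta$, and we conclude
\begin{equation*}
\dist(x, V_2) \;=\; \dist(w, V_2) \;\geq\; c\,\|w\| \;>\; c\,\delta \;=\; r,
\end{equation*}
which is the desired implication.

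The only non-routine step is verifying $W \cap V_2 = \{0\}$, which is precisely where the identification $U = V_1 \cap V_2$ is used; everything else is the standard compactness-plus-positive-homogeneity argument for bounding a linear-algebraic ``angle'' between two subspaces from below. No novel ideas are required, which is consistent with the paper's remark that the proof uses standard arguments.
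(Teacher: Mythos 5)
Your argument is correct: the orthogonal decomposition $V_1 = U \oplus (V_1\cap U^\perp)$, the identities $\dist(x,U)=\|w\|$ and $\dist(x,V_2)=\dist(w,V_2)$, and the compactness/homogeneity bound $\dist(w,V_2)\geq c\|w\|$ with $c>0$ (forced by $W\cap V_2=\{0\}$) give exactly the standard linear-algebra argument the paper alludes to when it omits the proof of Lemma~\ref{Lm:Distance}; the only point left implicit is the degenerate case $V_1\subseteq V_2$ (empty unit sphere in $W$), where the implication is vacuous since then $\dist(x,U)=0$ for all $x\in V_1$.
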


We prove Lemma \ref{Lm:bounded}  inductively. The idea is  easy: The regions are constructed by cutting something off. 
So a rather obvious approach to show that $R(C)$ is bounded is to show that every point in $V$ that is far enough away from the oririn is contained in a region that is in the complement of $R(C)$. The complement consists of all regions corresponding to faces $K<C$ and translated by  a lattice point in $L(K)$ that fulfills the properties (\ref{Ppt:Inside}) and (\ref{Ppt:Nonintersect}) for being in $X^C_K$.  
Therefore, we consider two cases, the first being that a point is away from the boundary of $C^\vee$. Then it will be easy to see that there is a fundamental domain $T(C_0)=R(C_0)$ of $\Lambda$ whose translate contains the point and is in the complement of $R(C)$. 
If a point $p$ is close to  the boundary and far enough away from $C^\perp$, we can show that close to that point we find a lattice point $x$ on the boundary and a region $R(K)$, $K<C$, such that $x+R(K)$ is in the complement of $R(C)$. We then face the obstacle that it is still unclear whether $p$ is in that particular region, since our regions are not even convex. 
The solution is to ensure that not only $x+R(K)$ is in the complement, but also all regions that cover the area around it.

\begin{proof}[Proof of Lemma \ref{Lm:bounded}] 
Let $C$ be a rational pointed cone. We prove the lemma by induction on
$n:=\dim(C)$.
We want to show that there exists a certain bounded set that contains $R(C)$. 
The bounded sets we want to consider are cylinders around the linear space $C^\perp$ with radius $r_C\in \Rg$: 
\begin{align*}
\C(r_C, C):= \left(C^\perp + B_{r_C}(0)\right) \cap \left( T(C) + \lin(C) \right),
\end{align*} 
where $B_r(x)$ is the open ball around $x\in V$ with radius $r\in \Rg$. Since $T(C)\subseteq C^\perp$, we can alternatively describe $\C(r_C,C)$ as 
\begin{equation*}
\C(r_C, C)=T(C)+(B_{r_C}(0) \cap \lin(C)).
\end{equation*} 

The case $n=0$ is simply noticing that $R(C_0)$ is a fundamental domain of the lattice $\Lambda=L(C_0)$ and as such it is by definition bounded. 

For $n>0$, let $C\subseteq V$ be a pointed rational cone of dimension $n>0$ and we assume that $R(K)\subseteq \C(r_K, K)$ for all faces $K<C$ with suitable $r_K\in \Rg$.

By construction, $R(C) \subseteq (T(C)+\lin(C))$. What we now need to show is that  $R(C) \subseteq \left(C^\perp + B_{r_C}(0)\right)$ for some $r_C\in \Rg$. We choose $r_C$  defined by the following constraints that might seem technical, but it has the advantageof being constructive.

\subsection*{Construction of $r_C$:}
\begin{quotation}
For each face $K<C$ we have  $R(K) \subseteq \C(r_K, K)$ by the inductive hypothesis. We define a second radius $r'_K$ by
\begin{displaymath}
r_K'=r_K+2 \cdot \max_{M<K} r_M.
\end{displaymath}
We recall that by Lemma \ref{Lm:TilingOneCone} we have a tiling of  $\upper {K^\vee}$ such that
\begin{align*}
   \upper {K^\vee}= \left( L(K)+R(K) \right) \cup \bigcup_{M<K} \left(  X_M^K + R(M) \right).
\end{align*}
So there exists an $s\in \Rg$ such that
\begin{equation}  \label{Eqn:U(K)}
U(K):= R(K) \cup \bigcup_{M<K} \; \bigcup_{x\in X^K_M \cap B_s(0) } x+R(M)
\end{equation} 
contains $\C(r_K', K)\cap  \upper {K^\vee}$. Since $U(K)$ is a finite union of bounded sets, $U(K)$ itself is bounded and we can find $u_K\in \Rg$ such that $U(K)\subseteq B_{u_K}(0)$.

Then $K_1, K_2<C$ with  $K_1 \vee K_2 =C$  implies $K_1^\perp \cap~K_2^\perp = C^\perp$ and we can apply Lemma \ref{Lm:Distance}: We find $\lambda_C\in \Rg$ such that $\dist(x,K_2^\perp)>u_{K_1}+u_{K_2}$ for all $x\in K_1^\perp$ with $\dist(x,C^\perp)>\lambda_C$. Let $t_K\in \Rg$ be a radius such that the fundamental domain $T(K)\subset B_{t_K}(0)$ and set
\begin{equation} \label{Eqn:hC}
h_C:=\lambda_C + \max_{K<C} \{2t_K\}.
\end{equation}
Then we  define 
\begin{equation} \label{Eqn:rC}
r_C:= \max_{K<C} \sqrt{h_C^2+ (r_K')^2}.
\end{equation}
\end{quotation}

To show that $R(C)\subseteq \left(C^\perp + B_{r_C}(0)\right)$, we show that for each point $p\in  V$ with $\dist(p, C^\perp)>r_C$ we  have either $p\notin \upper {C^\vee}$ or there are a face $K<C$ and a lattice point $x\in   X^C_K$ such that $p\in ( x+R(K))\subseteq \compl{R(C)}$. %\subseteq \compl{\big(\PR{C}\big) }$.

So let $p\in  \upper {C^\vee}$ with $\dist(p, C^\perp) \geq r_C$. We consider two cases. Figuratively speaking, we consider the case of $p$ being far away from the boundary of $C^\vee$ (in which case we can just find a translate of $R(C_0)$ that covers it), and the case $p$ being close to the boundary (where we use a translate of $U(K)$ as defined in (\ref{Eqn:U(K)}) to cover it by a translated region).

\subsection*{Case 1:} $\forall K<C$ with $\dim(K)\geq 1$, we have $\dist(p,K^\perp)\geq r_{K}'$.

Let $x\in \Lambda$ with $p\in x+R(C_0)$. Since $R(C_0)=T(C_0) \subseteq B_{r_{C_0}}(0)$, we get
\begin{align*}
\dist(x+T(C_0),K^\perp)\geq r_K'-2r_{C_0} \geq r_K >0 
\end{align*} 
for all $ K<C$ with $\dim(K)\geq 1$.
Hence, $(x+T(C_0))  \cap \bd(C^\vee)=\varnothing$. If $x+T(C_0) \subseteq V\backslash C^\vee $, we have $x+T(C_0) \subseteq V\backslash \upper {C^\vee}$. If $x+T(C_0) \in C^\vee$, then $x\in X^C_{C_0}$ and $p \in (x+R(C_0))  \subseteq \compl{R(C)}$.

 \subsection*{Case 2:} $\exists K<C$, $\dim(K)\geq 1$ with  $\dist(p,K^\perp)<r_K'$.

Let $K<C$ be the face of $C$ with maximal dimension such that $\dist(p,K^\perp)<r_K'$. Define $y:=p|_{K^\perp}$ to be the orthogonal projection of $p$ onto $K^\perp$ and let $x\in L(K)$ with $y\in x+T(K)$. 

As \textbf{first step} we want to show that $x\in X^C_K$. 

From $\dist(p,K^\perp)<r_K'$ by the Pythagorean theorem we can deduce
\begin{align*}
 \dist(y,C^\perp)^2+\dist(p,K^\perp)^2 = \dist(p,C^\perp)^2
\end{align*}
and thus
\begin{align*}
  \dist(y,C^\perp)^2 & = \dist(p,C^\perp)^2 - \dist(p,K^\perp)^2  \\
	& >r_C^2 - r_K' 
> h_C^2 + (r_K')^2 -(r_K')^2= h_C^2.
\end{align*} 
The last line follows from the premise that $\dist(p, C^\perp) \geq r_C$ and from the definition of $r_C$ in Equation (\ref{Eqn:rC}).
Since we have now shown that $ \dist(y,C^\perp) > h_C$, we have by definition of $h_C$ in (\ref{Eqn:hC}) that $\dist(x,C^\perp)>\lambda_C$ and thus by definition of $\lambda_C$
\begin{equation} \label{Eqn:1.uKuM}
\dist(x, M^\perp) > u_K + u_M
\end{equation}
for all $ M<C$  with $K\vee M =C$. 

We now want to show the same result holds
for all $M<C$ such that $K$ and $M$ are incomparable and $K \vee M < C$. By maximality of $K$ we get 
\begin{equation*}
\dist(p,(K \vee M)^\perp)>r_{K \vee M }'>r_{K \vee M }.
\end{equation*}

With exactly the same computations as above (with $K \vee M$ instead of $C$) we get 
\begin{equation}
\begin{aligned} \label{Eqn:2.uKuM}
 & \dist(y,(K \vee M)^\perp)^2 > h_{K \vee M} \\
 \Rightarrow \;\; & \dist(x,(K \vee M)^\perp) > \lambda_{K \vee M} \\
 \Rightarrow \;\; & \dist(x,M^\perp)>u_K+u_{M}.
\end{aligned}
\end{equation} 
That means
\begin{equation} \label{Eqn:uKuM}
\dist(x, M^\perp) > u_K + u_M
\end{equation}
for all $M<C$ incomparable to $K$.
Since $u_M>0$ and $x+R(K)\subseteq B_{u_K}(x)$ 
(in other words $x+R(K)$ has a positive distance to all other faces of $C^\vee$),  
we immediately get that $x$ fulfills Property (\ref{Ppt:Inside}) for being in $X^C_K$: 
 $x+R(K) \subseteq \Int (M^\vee)$ for all rays $M<C$ that are no rays of $K$. 

Regarding property (\ref{Ppt:Nonintersect}) of $X^C_K$, from Equation \eqref{Eqn:uKuM} we get 
\begin{equation*}
 (x+R(K)) \cap (x'+R(K')) = \varnothing
\end{equation*}
  for all $K'<C$ incomparable to $K$ and all $x'\in L(K')$.
%Since $(x+R(K))\backslash (x+\PRb{K})\subseteq x+T(K)+K$, as well as   $(z+R(M))\backslash (z+\PRb{M})\subseteq z+T(M)+M$ and $K, M$ are both normal cones of different faces of $C^\vee$, we also have
%\begin{equation*}
%(x+R(K)) \cap (z+R( M)) = \varnothing
%\end{equation*}
% for all $z \in L( M)$. 
Hence, $x$ also has property (\ref{Ppt:Nonintersect}) for being in $X^C_K$ and we have $x\in X^C_K$, which finishes step one.

Now $x+U(K)$ covers $x+(\C(r_K', K)\cap  \upper {K^\vee})$, where the latter contains $p$. 
If $p\in (x+R(K))$, we are done, since we have just shown that $x\in X^C_K$ and hence $(x+R(K))\subseteq \compl {R(C)}$. 
Otherwise, we have $p\in (a+R(K_1))$ for some $K_1<K$ and $a \in X^K_{K_1}$. 

The \textbf{second step} is to show that then $a \in X^C_{K_1}$ which yields $(a+R(K_1)) \subseteq \compl {R(C)}$.

 Since $a+R(K_1)\subseteq x+B_{u_K}$ and by Equation (\ref{Eqn:uKuM}) the distance of $x$ to $M^\perp$  in particular for all rays $M<C$ that are no rays of $K$, we see that $a$ has Property \ref{Ppt:Inside} for being in $X^C_{K_1}$.

Again, we need to consider different cases. Firstly, we observe that 
\begin{equation*}
\big(a+R(K_1)\big)\cap \big(b+R(M)\big) = \varnothing
\end{equation*}
 for all $M$ incomparable to $K$ (both for $K \vee M < C$ and $K \vee M = C$) and all $b\in L(M)$: 
We have $(a+R(K_1)) \subseteq B_{u_K}(x)$ and $(b+R(M)) \subseteq B_{u_{M}}(b)$ and as we have seen in (\ref{Eqn:uKuM}), we  have $\dist(x,M^\perp))>u_K+u_{M}$.

Secondly, we are left with the case $M<K$ and $M, K_1$ incomparable. 
But since we have $a\in X^K_{K_1}$, we get from property (\ref{Ppt:Nonintersect}) that
 $(a+R(K_1))\cap (b+(M)) = \varnothing$ for all $M<K$ with $M, K_1$ incomparable and all $b\in L(M)$. 
Hence, $a\in X^C_{K_1}$ and $p \in (a+R(K_1))\subseteq \compl R(C)$ as we wanted to show. 

Hence, we have shown that $R(C)$ is bounded.
\end{proof}

%%%%%%%%%%%%%%%%%%%%%%%%%%%%%%%%%%%%%%%%%%%%%%%%%%%%%%%%%%%%%%%%%%%%%%%%%%%%%%%%%
%%%%%%%%%%%%%%%%%%%%%%%%%%%%%%%%%%%%%%%%%%%%%%%%%%%%%%%%%%%%%%%%%%%%%%%%%%%%%%%%%

\section{Proof of Theorem \ref{Thm:MuIntro} (Local formula)} \label{sec:ProofLocForm}

 Assume we have chosen  fixed fundamental domains for all sublattices $L\subseteq \Lambda$. We recall the definition of the function~$\mu$ on rational cones that was given in Section~ \ref{sec:Intro}.  
We first set 
\begin{equation} \label{Eqn:muC0}
\mu(C_0):=v_{C_0}=1
\end{equation}
for the trivial cone $C_0=\{0\}$. 
For a pointed rational cone $C\subseteq V$ with $\dim(C)\!\geq\! 1$
we then define by induction on the dimension
\begin{equation} \label{Eqn:mu}
\mu(C):=v_C - \sum_{K< C} w^C_K \cdot \mu(K).
\end{equation}
Here, $v_C$ is the \vDC defined in~\eqref{Eqn:DefDCVolume}
and $w^C_K$ is the \w from~\eqref{Eqn:DefCorrectionVolume}.

For a rational cone $C\subseteq V$ that is not pointed but contains a maximal nontrivial linear subspace $U$, we can consider the pointed cone $C':=C\cap U^\perp$  in $U^\perp$, where we consider $U^\perp$ as a Euclidean space equipped with the induced inner product and the  lattice $\Lambda\cap U^\perp$. We can then construct $R(C') \subseteq U^\perp$ and set
 \begin{equation*}
\mu(C):=\mu(C').
\end{equation*} 

\addtocounter{thm}{-1}
\begin{thm}
The function~$\mu$ on 
rational cones in $V$ as defined in \eqref{Eqn:muC0} and \eqref{Eqn:mu}
is a local formula for Ehrhart coefficients. 
\end{thm}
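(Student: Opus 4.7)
The plan is to establish the identity $|\Lambda \cap t\mP| = \sum_{f\leq \mP}\mu(N_f)\vol(tf)$ for every full-dimensional lattice polytope $\mP$ and every sufficiently large $t\in\Zg$; since both sides are polynomials in $t$, this yields the claim for all $t\in\Zgeq$. The general case of a lower-dimensional $\mP$ will then follow by applying the full-dimensional result inside $\aff(\mP)$ and invoking the definition of $\mu$ on non-pointed cones to identify the relevant values.

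First I would turn Theorem~\ref{Thm:TilingIntro} into a DC-volume formula. The tiling $\upper{t\mP}=\bigsqcup_{f,\,x\in\X(tf)}(x+R(N_f))$ gives
\[
|\Lambda\cap t\mP|=\vol(\DC{t\mP})=\sum_{f\leq \mP}\,\sum_{x\in\X(tf)}\vol\!\bigl((x+R(N_f))\cap \DC{t\mP}\bigr).
\]
A local argument should show that for $x\in\X(tf)$ the bounded set $x+R(N_f)$ lies deep enough inside $t\mP$ (using the feasibility conditions defining $\X(tf)$ together with the boundedness established in Lemma~\ref{Lm:bounded}) that the only fundamental-domain translates meeting $x+R(N_f)$ are those based at lattice points in the shifted cone $x+N_f^\vee$. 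Hence the inner volume equals $v_{N_f}$ by~\eqref{Eqn:DefDCVolume}, yielding
\[
|\Lambda\cap t\mP|=\sum_{f\leq \mP} v_{N_f}\cdot |\X(tf)|. \qquad (\star)
\]

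Next I would establish a companion \emph{slice identity}: for each face $g\leq\mP$ and all sufficiently large $t$,
\[
\vol(tg)=|\X(tg)|+\sum_{f<g} w^{N_f}_{N_g}\cdot |\X(tf)|. \qquad (\star\star)
\]
The natural approach is to intersect the same tiling with the affine hull $\aff(tg)=tv_g+N_g^\perp$, where $v_g$ is any lattice vertex of $g$. Only the pieces $x+R(N_f)$ with $f\leq g$ meet this affine hull, and for each such $f$ the slice $R(N_f)\cap N_g^\perp$ should have $N_g^\perp$-volume exactly $w^{N_f}_{N_g}$ when $f<g$ (by~\eqref{Eqn:DefCorrectionVolume}, together with the fact that $R(N_f)\subseteq \upper{N_f^\vee}$ coincides with $N_f^\vee$ away from boundary effects) and exactly~$1$ when $f=g$ (because the prism structure $R(N_g)\subseteq T(N_g)+\lin(N_g)$ makes the central slice through $N_g^\perp$ equal to the fundamental domain $T(N_g)$). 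These slices should tile $tg$ for $t$ large, giving $(\star\star)$.

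With $(\star)$ and $(\star\star)$ secured, the final step is to invoke the recursion $\mu(N_f)=v_{N_f}-\sum_{g>f}w^{N_f}_{N_g}\mu(N_g)$, obtained from~\eqref{Eqn:mu} by translating cone-face containment $K<N_f$ into polytope-face containment $g>f$. Substituting for $v_{N_f}$ in $(\star)$ and swapping the order of summation gives
\[
|\Lambda\cap t\mP|=\sum_{g\leq\mP}\mu(N_g)\Bigl[|\X(tg)|+\sum_{f<g}w^{N_f}_{N_g}|\X(tf)|\Bigr]=\sum_{g\leq\mP}\mu(N_g)\vol(tg),
\]
where the last equality is $(\star\star)$. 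The main obstacle will be the slice identity $(\star\star)$: one must verify carefully that the intersections of the translated regions with $N_g^\perp$ really tile $tg$ up to a boundary strip of lower order in $t$, and that each slice volume matches the corresponding~$w^{N_f}_{N_g}$. This requires unpacking how the inductive cut-out construction of $R(N_f)$ in~\eqref{Eqn:region} interacts with the subspace $N_g^\perp$---in particular, showing that cut-outs coming from cones $K$ incomparable with $N_g$ contribute only measure-zero sets to the slice.
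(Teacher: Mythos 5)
Your proposal follows essentially the same route as the paper: the tiling of Theorem~\ref{Thm:TilingIntro} gives your $(\star)$, which is the paper's Lemma~\ref{Lm:PNvee}; your slice identity $(\star\star)$ is exactly the paper's Lemma~\ref{Lm:Vol(tf)} (proved there via Lemma~\ref{Lm:CoveringOfFace}, showing only regions of faces $g\leq f$ meet $tf$, with the $f=g$ slice being $T(N_f)$ of volume~$1$ as you observe); and the final substitution of the recursion for $v_{N_f}$ with a swap of summation order, followed by polynomiality in $t$, is identical. The proposal is correct in outline and matches the paper's argument.
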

That is, for every lattice polytope $\mP$ with Ehrhart polynomial $\E (t)=e_d t^d+e_{d-1}t^{d-1}+\dots + e_1t+e_0$, $t\in \Zgeq$, we have
\begin{align*}
e_i=\sum_{\substack{f \leq \mP \\ \dim(f)=i }} \mu(N_f) \vol(f), 
\end{align*}
for all $i\in\{0,\dots, d\}$.

As discussed in Section \ref{sec:Intro}, this is equivalent to 
\begin{equation*}
|\Lambda \cap t\mP| = \sum_{f\leq \mP} \mu(N_f)\vol(tf)
\end{equation*}
for all lattice polytopes $\mP$.

Before we prove Theorem \ref{Thm:MuIntro}, we make some simplifications. Since neither the Ehrhart polynomial, nor the function $\mu$, nor the relative volumes of faces change when we translate $\mP$ by a lattice point, we can without loss of generality assume that $0\in \mP$. %e.g. by translating it by the negative of a vertex of $\mP$. 
%Note that we are not assuming that 0 is in the interior of $\mP$, which might not be possible, since not every lattice polytope has an inner lattice point. 
Then $\lin(\mP)=\aff(\mP)$ and each normal cone $N_f$ with $f$ a face of $\mP$ contains the orthogonal space $\mP^\perp$ as a maximal  linear subspace. The definition thus yields $\mu(N_f)=\mu(N_f\cap\lin(\mP))$ for all faces $f$ of $\mP$. By considering $\mP \subseteq \lin(\mP)$, we can hence assume, without loss of generality, that $\mP$ is full dimensional and that all normal cones are pointed. 
As before, to shorten notation, for faces $f\leq \mP$ we henceforth write $R(f)$ instead of $R(N_f)$, also $L(f)$ instead of $L(N_f)$ for the sublattice $\Lambda\cap N_f^\perp\subseteq \Lambda$  and $T(f)$ for the fundamental domain $T(N_f)$ in $L(f)$. We also write $v_f$ and $w_f^g$ instead of $v_{N_f}$ and $w^{N_g}_{N_f}$ for faces $g<f\leq \mP$.
 But we keep in mind that these objects do not depend on the face $f$ itself, but only on the normal cone $N_f$. Note that $N_f\leq N_g$ if and only if $g\leq f$.

\begin{proof}[Proof of Theorem \ref{Thm:MuIntro}] To make the structure of the proof easier to grasp, we delay some steps into lemmas, which we will state and prove afterwards. 

Let $\mP$ be a full dimensional lattice polytope. Recall that $T$ is a fundamental domain of $\Lambda$. 
Since the relative volume is normalized, such that every fundamental domain has volume 1, we have the following equation for every $t\in \Zgeq$:
\begin{equation} \label{Eqn:Cellvolume}
| t\mP \cap \Lambda | =  \vol(\DC {t\mP})  = \vol((t\mP \cap \Lambda ) + T).
\end{equation}
Instead of counting the (discrete) number of lattice points in $t\mP$, we thus can compute the (continuous) volume of fundamental domains around each lattice point in $t\mP$. Following the notation of Section \ref{sec:Intro}, the right hand side of Equation~(\ref{Eqn:Cellvolume}) is the volume of the domain complex  of $t\mP$.

Let $\mathcal{P}$ be a polytope 
and $t\in \Zg$ big enough, such that we have a tiling of  $\upper \mP$ by regions as in  Theorem \ref{Thm:TilingIntro}:
\begin{equation*}
\{x+R(N_f) : \; f\leq \mP, \, x\in \X(tf)\},
\end{equation*}
with $\X(tf)$ the set of feasible lattice points in $tf$ as defined in  Section~\ref{sec:Tiling}, Equation~(\ref{Eqn:feasible}).
Then, as we will show in Lemma \ref{Lm:PNvee} below, we can divide the volume of the domain complex  into the parts in each region, which equals $v_f$ (the DC-volume in $R(F)$): 
\begin{displaymath}
 \vol( \DC {t\mP})= \sum_{f  \leq \mP} \left| \X(tf) \right|  \cdot v_{f}.
\end{displaymath}

Thus with the definition of $\mu(N_f)$ solved for $v_f$ we get
\begin{align*}
| \Lambda \cap t\mP |  
		& =  \sum_{f  \leq \mP} \left| \X(tf) \right|  \cdot v_f  \\ 
	& = \sum_{f \leq \mP} \left[    \left| \X(tf) \right| \cdot \left(   \mu(f) + \sum_{h>f}  w^f_h \cdot \mu(h)          \right)  \right]. \\ 
\end{align*}
We can now expand  the product and combinatorially rearrange the sum to get
\begin{align*}
| \Lambda \cap t\mP | 	& =\sum_{f \leq \mP} \left[   \left| \X(tf) \right|  \cdot  \mu(f) +  \left| \X(tf) \right| \cdot \sum_{h>f}  w^f_h \cdot \mu(h)    \right] \\ 
	& = \sum_{f \leq \mP} \underbrace{\left[   \sum_{g\leq f}  \left| \X(tg) \right|\cdot w^g_f  	 \right]}_{=:V(tf)} \cdot \mu(f) .
\end{align*}
In the last line  the expression $w^f_f$ for the \w  technically has not been defined yet --- we simply set $w^f_f:=1$ for faces $f\leq \mP$. 
%If one makes the effort to compare it to the definition and uses Lemma~\ref{Lm:FundInReg}, one notices that this is actually extends the definition consistently. 

By Lemma \ref{Lm:Vol(tf)} below we can now use that
\begin{equation*}
V(tf)=\vol(tf),
\end{equation*}
which  yields
\begin{equation} \label{Eqn:EhrTBig}
|  t\mP \cap \Lambda | =  \sum_{f  \leq \mP}  \vol(tf) \cdot \mu(f)
\end{equation}
for all $t\in \Zgeq$ with $t>t_0$ for a certain $t_0\in \Zgeq$. By Ehrhart's Theorem~\cite{Ehrhart}, we know that $\E(t)=|  t\mP \cap \Lambda |$ is a polynomial in $t$, as is the right hand side of Equation~(\ref{Eqn:EhrTBig}). Since these polynomials agree for infinitely many $t$, we have equality and get
\begin{equation*}
\E(t)= \sum_{f \leq \mP} \mu(N_f) \vol(tf)
\end{equation*}
for all $t\in \Zgeq$. That shows that for each choice of fundamental domains, the resulting function $\mu$ is a local formula for Ehrhart coefficients. 

\end{proof}

%As before, to shorten notation, we henceforth write $R(f)$ instead of $R(N_f)$ for faces $f\leq \mP$, but keep in mind, that it does not depend on the face $f$, but only on the normal cone $N_f$.

\begin{lemma} \label{Lm:PNvee}
We have 
\begin{displaymath}
 \vol( \DC {t\mP })= \sum_{f  \leq \mP} \left| \X(tf) \right|  \cdot v_f 
\end{displaymath}
for all $t\in \Zgeq$ big enough in the sense of Theorem \ref{Thm:TilingIntro}.
\end{lemma}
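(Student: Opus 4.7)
The plan is to apply the tiling of $\upper{t\mP}$ provided by Theorem~\ref{Thm:TilingIntro}. The containment $\DC{t\mP}\subseteq\upper{t\mP}$ is immediate from the definitions (any lattice point of $t\mP$ produces a translate $z+T$ that intersects $t\mP$), so volume is additive across the tiles, giving
\begin{equation*}
\vol(\DC{t\mP})=\sum_{f\leq\mP}\sum_{x\in\X(tf)}\vol\!\bigl((x+R(N_f))\cap\DC{t\mP}\bigr).
\end{equation*}
It therefore suffices to establish the local identity
\begin{equation*}
\vol\!\bigl((x+R(N_f))\cap\DC{t\mP}\bigr)=v_{N_f}
\end{equation*}
for each face $f\leq\mP$ and each $x\in\X(tf)$.

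Since $x\in\Lambda$, translating by $-x$ yields $\DC{t\mP}-x=\DC{t\mP-x}$, so the local identity is equivalent to the set equality $R(N_f)\cap\DC{t\mP-x}=R(N_f)\cap\DC{N_f^\vee}$. Because the lattice translates of $T$ partition $V$, every $y\in R(N_f)$ lies in a unique $z(y)+T$; the set equality therefore reduces to the combinatorial claim that for every lattice point $z$ with $(z+T)\cap R(N_f)\neq\varnothing$,
\begin{equation*}
x+z\in t\mP\;\Longleftrightarrow\;z\in N_f^\vee.
\end{equation*}
By Lemma~\ref{Lm:bounded}, $R(N_f)$ is bounded, so only finitely many $z$ are relevant.

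To prove this equivalence I would unpack the definition $\X(tf)=\bigcap_{v}(X^{N_v}_{N_f}+tv)$, where $v$ ranges over the vertices of $f$. The forward direction is geometric: every facet $F$ of $\mP$ with $F\supseteq f$ satisfies $x\in tf\subseteq tF$, so $x$ lies on the bounding hyperplane of $tF$, and $x+z\in t\mP$ forces $\langle n_F,z\rangle\leq 0$; intersecting over all such $F$ yields $z\in N_f^\vee$. For the reverse direction, the halfspace constraints coming from facets $F\supseteq f$ are immediate from $z\in N_f^\vee$; for facets $F\not\supseteq f$ containing some vertex $v$ of $f$, property~(\ref{Ppt:Inside}) of $X^{N_v}_{N_f}$ places $x+R(N_f)$ strictly inside the halfspace $tv+N_F^\vee$ of $t\mP$ corresponding to $F$; and facets not meeting a neighborhood of $tf$ are too distant to constrain $x+z$, since $z$ ranges over a bounded set. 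The main obstacle is passing from strict interior containment of the continuous set $x+R(N_f)$ to the corresponding halfspace inclusion for the discrete point $x+z$: writing $z=y-\tau$ with $y\in R(N_f)$ and $\tau\in T$, one must combine the interiority in~(\ref{Ppt:Inside}) with the non-collision property~(\ref{Ppt:Nonintersect}) of $X^{N_v}_{N_f}$ to ensure that no relevant lattice point $z$ can slip across a bounding hyperplane of $t\mP$. Once this pointwise equivalence is established, $v_{N_f}$ factors out of the inner sum, completing the proof.
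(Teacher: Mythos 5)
Your setup coincides with the paper's: both arguments tile $\upper {t\mP}$ via Theorem~\ref{Thm:TilingIntro}, reduce to the local identity $\vol\bigl((x+R(N_f))\cap \DC{t\mP}\bigr)=v_{N_f}$, and then decide, tile by tile, which lattice translates $z+T$ meeting $R(N_f)$ actually contribute. Your forward implication ($x+z\in t\mP\Rightarrow z\in N_f^\vee$) is correct and is exactly the paper's containment $t\mP\cap\Lambda\subseteq (x+N_f^\vee)\cap\Lambda$. The problem is the reverse implication, which you explicitly label ``the main obstacle'' and leave unresolved; that step is the entire content of the lemma, so as written there is a genuine gap. Property~(\ref{Ppt:Inside}) gives $x+R(N_f)\subseteq tv+\Int(N_F^\vee)$, but the lattice point $x+z$ need not lie in $x+R(N_f)$ --- only some point of $z+T$ lies in $R(N_f)$ --- so no containment of the continuous set forces $x+z$ into the halfspace. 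Saying one ``must combine'' (\ref{Ppt:Inside}) and (\ref{Ppt:Nonintersect}) names the right ingredients but does not perform the combination.

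The paper closes this by contradiction. If $x+z\notin t\mP$, then for $t$ large enough one finds a vertex $v$ of $f$ and a facet $F\ni v$ with $x+z\notin tv+N_F^\vee$; necessarily $F\not\supseteq f$, because for facets containing $f$ one has $z\in N_f^\vee\subseteq N_F^\vee$ and $x-tv\in N_F^\perp$, so the constraint from $F$ is automatically satisfied. Put $z':=z+x-tv\in\Lambda$. Since $z'\notin N_F^\vee$, the tile $z'+T$ is not contained in $\Int(N_F^\vee)$, hence $z'\notin X^{N_F}_{C_0}$; by Equation~(\ref{Eqn:L+R}) applied to the ray $N_F$, the tile $z'+T$ is then either disjoint from $\upper{N_F^\vee}$ or contained in $L(N_F)+R(N_F)$. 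In the first case it misses $(x-tv)+R(N_f)\subseteq\Int(N_F^\vee)$ by property~(\ref{Ppt:Inside}); in the second, property~(\ref{Ppt:Nonintersect}) for $x-tv\in X^{N_v}_{N_f}$ against the incomparable face $N_F$ yields the same disjointness. Either way $(z+T)\cap R(N_f)=\varnothing$, contradicting your hypothesis on $z$. This detour through the one-cone tiling of Lemma~\ref{Lm:TilingOneCone} --- recognizing that a tile $z'+T$ that pokes out of $\Int(N_F^\vee)$ is itself a forbidden translate in the sense of property~(\ref{Ppt:Nonintersect}) --- is the idea your sketch is missing.
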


\begin{proof}
We recall the definition of $\X(tf)$ as
\begin{align*}
\X(tf):= \bigcap_{v \textnormal{ vertex of } f} X^{N_v}_{N_f} +tv,
\end{align*} 
where $ X^{N_v}_{N_f}$ is the set of lattice  points that we constructed in Section~\ref{sec:Regions}. For an illustration of the sets $ X^{N_v}_{N_f}$ in a triangle see Figure~\ref{fig:ConstructRC} and for $\X(tf)$ see Figure~\ref{fig:TriTilingOneCone}.

Let $t\in \Zgeq$ be big enough, such that by Theorem \ref{Thm:TilingIntro} we have a tiling of  $\upper \mP$, which covers the domain complex, into regions:
\begin{align*}
\{x+R(N_f) : \; f\leq \mP, \, x\in \X(tf)\}.
\end{align*}
To compute $ \DC {t\mP}  = \vol((t\mP \cap \Lambda ) + T)$ we can thus compute the volume in each region and add everything up:
\begin{align} \label{Eqn:VolTiling}
\vol( \DC {t\mP })= \sum_{f \leq \mP} \sum_{x\in \X(tf)}     \vol\big(\left(x+R(f)\right) \cap  \DC {t\mP } \big).
\end{align}
Hence, for $f\leq \mP$ it suffices to show that the volume on the right hand side of the equation equals the \vDC of $N_f$, namely that
\begin{align*}
  \vol\big(\left(x+R(f)\right) \cap  \DC {t\mP }\big)  = v_f
\end{align*}
for all $x\in \X(tf)$. Then Equation (\ref{Eqn:VolTiling}) yields
\begin{align*}
\vol((t\mP \cap \Lambda ) + T)&= \sum_{f\leq \mP} \sum_{x\in \X(tf)}  v_f\\
		&=  \sum_{f \leq \mP} \left| \X(tf) \right|  \cdot v_f 
\end{align*}
as desired. 

We recall the definition of the \vDC $v_f$ as
\begin{equation*}
v_f=  \DC {N_f^\vee }= \vol \big(R(f) \cap ((N_f^\vee \cap \Lambda ) + T)\big).
\end{equation*}
Let $x\in \X(tf)$. Since $(t\mP \cap \Lambda) \subseteq (x+N_f^\vee) \cap \Lambda$, we have
\begin{equation} \label{Eqn:LatPNvee}
(x+R(f))\cap  \DC {t\mP } \subseteq  \big(x+R(f)\big) \cap  \DC {x+N_f^\vee }
\end{equation}
We first want to show that we have equality in (\ref{Eqn:LatPNvee}). To do so, we show that 
\begin{equation} \label{Eqn:RcapT}
(x+R(f)) \cap (y +T)=\varnothing
\end{equation}
for all $y\in ((x+N_f^\vee ) \cap \Lambda)\backslash (t\mP \cap \Lambda)$. 
$y\notin (t\mP \cap \Lambda)$ means there is a vertex $v$ of $\mP$ with $y\notin (N_v^\vee + tv)$ .
By considering $t$  large enough, we can ensure that Equation~(\ref{Eqn:RcapT}) holds for all $y$ with $\displaystyle y\in \bigcap_{v\in f} (N_v^\vee +tv)$  and $y\notin (N_v^\vee +tv)$ for a vertex $v \in \mP$ that is not a vertex of $f$. So let $y\in  ((x+N_f^\vee ) \cap\Lambda)$ with $y\notin  (N_v^\vee + tv)$ for a vertex $v$ of $f$. Then there is a facet $F$ of $\mP$ with $v\in F$ and $y \notin (N_F^\vee +tv)$.
But then $(y-tv)+T \nsubseteq  \Int(N_F^\vee)$ and hence, $(y-tv+T)\subseteq L(F)+R(F)$ (cf. Equation~(\ref{Eqn:L+R}) in the proof of Lemma~\ref{Lm:TilingOneCone}).
Since $x\in \X(tf)$, we have $(x-tv)\in X^{N_v}_{N_f}$ and thus $(x-tv+R(f))\cap (y-tv+T)\neq \varnothing$, which is equivalent to (\ref{Eqn:RcapT}). Hence, equality in Equation (\ref{Eqn:LatPNvee}) follows.

Since $x\in \X(tf) \subseteq \Lambda$, we have $(x+N_f^\vee ) \cap \Lambda=x+(N_f^\vee \cap \Lambda)$ and hence,
\begin{align*} 
\vol\big((x+R(f))\cap \DC {t\mP }\big) & =  \vol\big((x+R(f)) \cap  \DC {x+N_f^\vee }\big)\\
		&=\vol\big(R(f) \cap   \DC {N_f^\vee } \big)\\
		&= v_f,
\end{align*}
as we wanted to show.

\end{proof}

Lemma \ref{Lm:FundInReg} gives a general property of the regions and independent of a concrete polytope, so we use the general notation of cones. 

\begin{lemma} \label{Lm:FundInReg}
For each pointed cone $C$ the fundamental domain  $T(C)$ in the linear space $C^\perp$ is contained in the region $R(C)$.
\end{lemma}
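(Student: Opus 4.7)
The plan is to verify directly that every point of $T(C)$ lies in each of the three sets whose intersection defines $R(C)$ in Equation~\eqref{Eqn:region}. The argument proceeds by induction on $\dim(C)$. The base case $C=C_0$ is immediate, since by definition $R(C_0)=T(C_0)$.

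For the inductive step with $\dim(C)\geq 1$, I would observe that trivially $T(C)\subseteq T(C)+\lin(C)$, and that $T(C)\subseteq C^\perp\subseteq C^\vee\subseteq\upper{C^\vee}$. So the real content is to show
\begin{equation*}
T(C)\cap\bigl(x+R(K)\bigr)=\varnothing \quad\text{for every face } K<C \text{ and every } x\in X_K^C.
\end{equation*}
Fix such a $K$ and $x$. Since $C$ is pointed of dimension at least one and $K$ is a proper face, $C$ admits at least one ray $M$ that is not a face of $K$ (otherwise $K$ would contain every ray of the pointed cone $C$ and hence equal $C$). By property~(I) in the definition of $X_K^C$, we then have $x+R(K)\subseteq\Int(M^\vee)$, so it suffices to show that $T(C)$ is disjoint from $\Int(M^\vee)$.

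The key geometric observation is that, because $M\leq C$ implies $M\subseteq C$, we get $C^\perp\subseteq M^\perp$, and because $M$ is a nontrivial ray, $M^\perp$ is precisely the bounding hyperplane of the closed halfspace $M^\vee$, hence $M^\perp\cap\Int(M^\vee)=\varnothing$. Chaining these inclusions gives $T(C)\subseteq C^\perp\subseteq M^\perp$, which is disjoint from $\Int(M^\vee)\supseteq x+R(K)$, as required.

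The main obstacle, such as it is, amounts to checking that a proper face $K<C$ of a pointed cone really does leave at least one ray of $C$ outside of it (which is immediate from pointedness and the fact that a pointed cone is the positive hull of its rays) and then exploiting the orthogonality $C^\perp\subseteq M^\perp\subseteq\bd(M^\vee)$. No inductive hypothesis on the regions $R(K)$ is actually needed here, since property~(I) already furnishes the confinement of $x+R(K)$ to $\Int(M^\vee)$; the induction is implicit through the construction of the $X_K^C$.
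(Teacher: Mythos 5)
Your proof is correct, and for the main case it takes a genuinely different route from the paper's. The paper also argues by showing that any $x\in L(K)$ with $p\in x+R(K)$ for $p\in T(C)$ must fail to lie in $X_K^C$, but it splits into cases: for $\dim(C)=1$ it uses property~(I) essentially as you do (via $T(C)\subseteq\bd(C^\vee)$), while for $\dim(C)>1$ it instead picks a ray $M<C$ incomparable to $K$, invokes the inductive hypothesis $T(M)\subseteq R(M)$ to conclude $T(C)\subseteq C^\perp\subseteq L(M)+R(M)$, and then derives a violation of the non-intersection property~(II). Your argument bypasses both the induction and property~(II) entirely: since a proper face $K<C$ of a pointed cone must miss some ray $M\leq C$, property~(I) confines $x+R(K)$ to the open halfspace $\Int(M^\vee)$, which is disjoint from $M^\perp\supseteq C^\perp\supseteq T(C)$. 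This is shorter, uniform in the dimension, and makes the lemma logically independent of the boundedness and tiling machinery; the paper's version, by contrast, leans on the already-established inductive structure of the regions, which costs nothing there but is not actually needed. Both proofs correctly reduce the problem to the complement term in Equation~\eqref{Eqn:region}, the other two intersectands being handled by $T(C)\subseteq T(C)+\lin(C)$ and $T(C)\subseteq C^\perp\subseteq C^\vee\subseteq\upper{C^\vee}$.
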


\begin{proof}
We show the statement inductively. Let $C_0=\{0\}$ be the 0-dimensional cone. Then by construction $R(C_0)=T(C_0)$ and the assertion holds. Now, let $C$ be a 1-dimensional pointed cone. Then the corresponding region $R(C)$ is given by 
\begin{displaymath}
R(C)=\big(V\backslash \left(   X^C_{C_0}+ R(C_0)\right)\big)\cap \left(T(C)+\lin(C) \right)\cap \upper {C^\vee}.
\end{displaymath}
Since  $T(C) \subseteq \left(T(C)+\lin(C) \right)$ and also $T(C)\subseteq \upper {C^\vee}$, we only need to show that $T(C)\subseteq V\backslash \left( X^C_{C_0}+ R(C_0)\right)$. 
Therefore, let $p\in T(C)$. Then there is exactly one $x\in \Lambda$ with $p\in (x+R(C_0))$.  
 Since $p\in \bd(C^\vee)$, we have $(x+R(C_0)) \cap \bd(C^\vee)\neq \varnothing$ and thus $x\notin  X^C_{C_0}$.
Hence, \[p\in (x+T(C_0))\subseteq  \big(V\backslash \left( X^C_{C_0}+ R(C_0)\right)\big).\]

Now, let $C$ be any pointed cone with $\dim(C)>1$. We now have
\begin{displaymath}
R(C)=\left(V\backslash \bigcup_{K<C} \left(X_K^C+R(K)\right)\right) \cap \left( T(C) + \lin(C)\right) \cap \upper {C^\vee}.
\end{displaymath}
Again, $T(C)\subseteq \left( T(C) + \lin(C)\right)$ and $T(C)\subseteq \upper {C^\vee}$. Let $p\in T(C)$. 
Assume we have a face $K<C$ and a lattice point $x\in L(K)$ with $p\in x + R(K)$. 
We again want to show that then $x\notin X^C_K$. Since $\dim(C)>1$ we find  a ray $M<C$ incomparable with $K$. The inclusion $T(C)\subseteq C^\perp \subseteq M^\perp$ yields $T(C)\subseteq L(M)+T(M)$. By induction we can assume $T(M)\subseteq R(M)$ and hence, $T(C) \subseteq L(M)+R(M)$.  So for $p\in T(C)$, if $p\in (x+R(K))$ for some $x\in L(K)$, then there exists $y\in L(M)$, such that $p\in (x+R(K)) \cap (y+R(M))$ and hence, $x\notin X^C_K$ by Property~(\ref{Ppt:Nonintersect}) in the construction of $X^C_K$. Hence, we have shown, that $\displaystyle p\in \left(V\backslash \bigcup_{K<C} \left(X_K^C+R(K)\right)\right)$ for any $p\in T(C)$ and thus $T(C)\subseteq R(C)$.
\end{proof}

\begin{lemma} \label{Lm:CoveringOfFace}
There exists a $t_0\in \mathbb{Z}_{>0}$ such that for each $t\geq t_0$ 
the dilation of a face $f<\mP$ by $t$ satisfies 
\begin{equation*}
tf\subseteq \bigcup_{g\leq f}\left( \X(tg)+R(g) \right).
\end{equation*}
\end{lemma}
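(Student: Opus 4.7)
The plan is to reduce the statement to the main tiling result (Theorem~\ref{Thm:TilingIntro}) applied to $\mP$ itself. Since $tf\subseteq t\mP\subseteq\upper{t\mP}$, every $p\in tf$ already lies in exactly one tile $x+R(g)$ with $g\leq\mP$ and $x\in\X(tg)\subseteq tg$; the whole content of the lemma is that, under the additional hypothesis $p\in tf$, one necessarily has $g\leq f$. I will take $t_0$ to be the constant provided by Theorem~\ref{Thm:TilingIntro} and argue $g\leq f$ by contradiction, splitting on whether $f\cap g$ is empty.

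If $f\cap g=\varnothing$, then by Lemma~\ref{Lm:FundInReg} and our convention that fundamental domains contain the origin, $0\in T(g)\subseteq R(g)$, so $x\in tg$ gives $p\in x+R(g)\subseteq tg+R(g)$; analogously $tf\subseteq tf+R(f)$. But the definition of $t_0$ in the proof of Theorem~\ref{Thm:TilingIntro} ensures $(R(f)+tf)\cap(R(g)+tg)=\varnothing$ for any disjoint pair of faces and $t\geq t_0$, contradicting $p$ lying in both.

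If instead $f\cap g\neq\varnothing$, then $f\cap g$ is a face of $\mP$ and hence has a vertex $v$, which is simultaneously a vertex of $f$ and of $g$. Suppose for contradiction that $g\not\leq f$. Then $g$ has a vertex $w\notin f$, and since $f$ equals the intersection of the facets of $\mP$ containing it, there is a facet $F$ with $f\leq F$ and $w\notin F$. In particular $v\in f\subseteq F$ while $g\nsubseteq F$, so the extreme ray $M:=\Rgeq n_F$ (with $n_F$ the outer facet normal) is a ray of $N_v$ that is not a ray of $N_g$. By definition of $\X(tg)$ we have $x-tv\in X^{N_v}_{N_g}$, and condition~(\ref{Ppt:Inside}) in the construction of $X^{N_v}_{N_g}$ then yields
\begin{equation*}
(x-tv)+R(g)\subseteq\Int(M^\vee)=\{z\in V:\langle n_F,z\rangle<0\},
\end{equation*}
which after translating by $tv$ reads $x+R(g)\subseteq\{z:\langle n_F,z\rangle<t\langle n_F,v\rangle\}$. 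Applied to $p\in x+R(g)$ this gives $\langle n_F,p\rangle<t\langle n_F,v\rangle$, contradicting $p\in tf\subseteq tF$ which forces $\langle n_F,p\rangle=t\langle n_F,v\rangle$.

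The main conceptual step is the intersecting case: condition~(\ref{Ppt:Inside}) was built into $X^{N_v}_{N_g}$ precisely so that $x+R(g)$ sits strictly off every facet hyperplane that contains $v$ but not $g$, and this is exactly what rules out $p$ lying on a facet $F\supseteq f$ with $g\nsubseteq F$. The disjoint-faces case is settled for free by the $t_0$ already produced in Theorem~\ref{Thm:TilingIntro}, so no further quantitative separation is needed, and I do not anticipate any technical obstacle beyond unpacking the definitions carefully.
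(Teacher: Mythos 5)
Your proof is correct, and it follows the same overall strategy as the paper: restrict the tiling of Theorem~\ref{Thm:TilingIntro} to $tf$ and rule out every tile $x+R(g)$ with $g\nleq f$. The difference lies in the case analysis. The paper splits the faces $g\nleq f$ into four groups (disjoint from $f$; incomparable with nonempty intersection; $g=\mP$; and $f<g<\mP$) and, for the last group, has to invoke condition~(\ref{Ppt:Nonintersect}) together with Lemma~\ref{Lm:FundInReg} applied to a facet $F$ with $f\subseteq g\cap F$ and $g,F$ incomparable. You instead collapse the three non-disjoint groups into a single argument: whenever $g\nleq f$ and $v\in f\cap g$ is a vertex, the fact that $f$ is the intersection of the facets containing it produces a facet $F\supseteq f$ with $g\not\subseteq F$, so $N_F$ is a ray of $N_v$ not contained in $N_g$, and condition~(\ref{Ppt:Inside}) pushes $x+R(g)$ strictly off the hyperplane $\langle n_F,\cdot\rangle=t\langle n_F,v\rangle$ on which $tf$ lies. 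This is cleaner and also makes explicit the existence of the facet $F$, which the paper's case~2 asserts somewhat tersely ("for at least one facet $F$ containing $v$ that does not contain $g$"); what it costs is nothing, since your use of Lemma~\ref{Lm:FundInReg} in the disjoint case ($0\in T(f)\subseteq R(f)$, so $tf\subseteq tf+R(f)$) is exactly what the paper's appeal to the $t_0$ from Theorem~\ref{Thm:TilingIntro} also implicitly relies on. Both arguments use the same $t_0$ and the same inclusion $\X(tg)\subseteq tg\cap\Lambda$, so the quantitative content is identical.
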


\begin{proof} 
For $t_0$ big enough, Theorem \ref{Thm:TilingIntro} yields that 
\begin{equation} \label{FullTiling}
tf\subseteq \bigcup_{g  \leq \mP} \left( \X(tg) +R(g) \right)
\end{equation}
for all $t\geq t_0$.
We need to show that in (\ref{FullTiling}) the translated regions of faces $g $ of $ \mP$ with $g\nleq f$ do not intersect with $tf$. We can divide these faces into four groups: the faces $g$ with $f\cap g =\varnothing$; the faces $g$ with $f\cap g \neq \varnothing$ and $f, g$ are incomparable; the face $g=\mP$; and the faces $g$ with $f<g<P$. 

By choosing $t_0$ big enough (cf. proof of Theorem \ref{Thm:TilingIntro}), we can ensure that $(x+R(g))\cap tf = \varnothing$ for all $g\leq \mP$ that do not intersect $f$ and all $\displaystyle x\in \X(tg)$. 

For the second case, let $g\leq \mP$ with $f, g$ incomparable and there exists a vertex $v$ of $\mP$ with $v\in f\cap g$. Let $x\in \X(tg)$. Then $(x-tv)\in X^v_g$. 
In particular (by property (\ref{Ppt:Inside})), that means  $((x-tv)+R(g))\subseteq \Int(N_F^\vee)$ for all facets $F$ that contain $v$ but not $g$.
  But since $tf-tv$ is on the boundary of $N_F^\vee$ and  for at least one facet $F$ containing $v$ that does not contain $g$, we get that $(x-tv+R(g)) \cap (tf-tv) = \varnothing$ and hence, $(x+R(g)) \cap tf = \varnothing$.

For the case $g=\mP$ we note that $R(g)=T$ and then $(x+R(g)) \cap tf = \varnothing$ follows by exactly the same arguments as in the second case. 

In the fourth case, we consider $g$ with $f< g< \mP$. 
Then there exists a facet $F$ of $\mP$ with $f\subseteq g\cap F$ and $g, F$ incomparable. By Lemma \ref{Lm:FundInReg} we have that $T(F)\subseteq R(F)$. Let $v$ be a vertex of $\mP$ with $v\in f$ and let $x\in \X(tg)$. Then  $(x-tv)\in X^v_g$ and thus, by property (\ref{Ppt:Nonintersect})  we have $(y+R(F))\cap (x-tv+R(g))=\varnothing$ for all $y\in L(F)$ and in particular 
\begin{equation} \label{Eqn:foo}
(y+T(N_F))\cap (x-tv+R(g))=\varnothing
\end{equation} 
for all $y\in L(N_F)$. 
Since $f\subseteq F$, we have $tf\subseteq tv+L(F)+T(F)$ which, together with (\ref{Eqn:foo}) shows that  $(x+R(g)) \cap tf = \varnothing$.

Hence, we have shown that for all faces $g$ with $g\nleq f$ the intersection $tf \cap (\X(tg)+R(g))$ is empty and hence, $\displaystyle tf\subseteq \bigcup_{g\leq f}\left( \X(tg)+R(g) \right)$ as we wanted to show.
\end{proof}

\begin{lemma} \label{Lm:Vol(tf)}
There exists a $t_0\in \mathbb{Z}_{>0}$ such that for each $t\geq t_0$ and  every face $f<\mP$
\begin{equation*}
\vol(tf)=\sum_{g\leq f}  \left| \X(tg) \right|\cdot w^g_f .
\end{equation*}
\end{lemma}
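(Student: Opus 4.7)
The plan is to combine the tiling produced by Theorem \ref{Thm:TilingIntro} with Lemma \ref{Lm:CoveringOfFace} to decompose $tf$ into pieces indexed by pairs $(g,x)$ with $g \leq f$ and $x \in \X(tg)$, and then identify the volume of each piece with $w^g_f$. For $t$ above a suitable threshold $t_0$ (merging the thresholds of Theorem \ref{Thm:TilingIntro} and Lemma \ref{Lm:CoveringOfFace}), Lemma \ref{Lm:CoveringOfFace} gives $tf \subseteq \bigcup_{g \leq f}(\X(tg) + R(g))$, while Theorem \ref{Thm:TilingIntro} ensures that the tiles $x + R(g)$ appearing here have pairwise empty intersections. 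Intersecting with $tf$ yields
\[
\vol(tf) \;=\; \sum_{g \leq f}\sum_{x \in \X(tg)} \vol\bigl((x+R(g)) \cap tf\bigr),
\]
so it suffices to prove $\vol\bigl((x+R(g)) \cap tf\bigr) = w^g_f$ for every $g \leq f$ and every $x \in \X(tg)$.

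Translating by $-x$ and using that $x \in tg \subseteq \aff(tf)$ (so $\aff(tf) - x = N_f^\perp$), the desired equality reduces to the pointwise identity
\[
R(g) \cap (tf - x) \;=\; R(g) \cap N_f^\perp \cap N_g^\vee.
\]
For the inclusion ``$\subseteq$'', if $y \in R(g)$ satisfies $x + y \in t\mP$, then for every facet $F$ of $\mP$ containing $g$ the point $x \in tg \subseteq tF$ already saturates the supporting inequality of $F$, forcing $y \in N_F^\vee$; intersecting over all such facets gives $y \in N_g^\vee$. For ``$\supseteq$'', given $y \in R(g) \cap N_f^\perp \cap N_g^\vee$, the point $x + y$ lies in $\aff(tf)$ by construction, and $x + y \in t\mP$ is checked facet by facet: facets through $g$ are automatic since $y \in N_g^\vee$; facets containing some vertex $v$ of $g$ but not containing $g$ itself are handled by the defining condition $(x - tv) + R(g) \subseteq \Int(N_F^\vee)$ of $X^v_g$; facets disjoint from $g$ are handled by the boundedness of $R(g)$ (Lemma \ref{Lm:bounded}), as the distance from $tg$ to such a facet $tF$ grows linearly in $t$ while $y$ stays bounded. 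The diagonal case $g = f$ matches the convention $w^f_f := 1$: by Lemma \ref{Lm:FundInReg} together with the inclusion $R(f) \subseteq T(f) + \lin(N_f)$, an orthogonal-decomposition argument yields $R(f) \cap N_f^\perp = T(f)$, of lattice volume $1$, and this is consistent with the general formula $\vol(R(f) \cap N_f^\perp \cap N_f^\vee) = \vol(R(f) \cap N_f^\perp)$ since $N_f^\perp \subseteq N_f^\vee$.

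The main obstacle is the simultaneous choice of the threshold $t_0$. It must accommodate the thresholds from Theorem \ref{Thm:TilingIntro} and Lemma \ref{Lm:CoveringOfFace}, as well as a further ``facet-clearance'' threshold making the bounded-perturbation argument for facets disjoint from $g$ work: this depends on the (finite) diameter of each $R(g)$ and on the positive gaps between each vertex of $g$ and each facet of $\mP$ disjoint from $g$. Since the face lattice of $\mP$ is finite, taking the maximum over finitely many such thresholds produces a single $t_0$ that validates the argument simultaneously for every face $f \leq \mP$ and every $g \leq f$, completing the proof.
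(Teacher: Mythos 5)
Your argument is essentially the paper's own proof: the same decomposition of $\vol(tf)$ via Theorem \ref{Thm:TilingIntro} and Lemma \ref{Lm:CoveringOfFace}, followed by the identification $\vol((x+R(g))\cap tf)=w^g_f$, which the paper states as the two facts $(-x+tf)\subseteq N_f^\perp\cap N_g^\vee$ and $[(N_f^\perp\cap N_g^\vee)\setminus(-x+tf)]\cap R(g)=\varnothing$ and which you verify in more detail with a correct facet-by-facet check using property (\ref{Ppt:Inside}) and Lemma \ref{Lm:bounded}. Your treatment of the diagonal case $g=f$ via Lemma \ref{Lm:FundInReg} is a sound consistency check for the convention $w^f_f=1$, which the paper simply imposes by definition.
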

In other words, the volume of $tf$ is given by the number of feasible lattice points in $tf$, $\X(tf)$, plus the  \ws for each face $g<f$.

\begin{proof}
We recall that $w^g_f$ is defined in \eqref{Eqn:DefCorrectionVolume} by 
$$
w^g_f := \vol\left(	R(g) \cap N_f^\perp\cap  N_g^\vee \right)
.
$$
From Lemma \ref{Lm:CoveringOfFace} we  deduce
\begin{align*}
\vol(tf)  
	&= \vol\left(	\bigcup_{g\leq f} \left(	(\X(tg)+R(g))\cap tf 	\right)	\right) \\
	&=\sum_{g\leq f} \sum_{x\in \X(tg)} \vol\left(	(x+R(g))\cap tf	\right).
\end{align*}
For  $g\leq f$ we have
\begin{align*}
 \vol\left(	(x+R(g))\cap tf	\right) =  \vol\left(	R(g)\cap (-x+tf)	\right)
\end{align*}
and since $(-x+tf)\subseteq (N_f^\perp\cap N_g^\vee)$ and $ [( N_f^\perp\cap N_g^\vee)\backslash (-x+tf)] \cap R(g) = \varnothing$, we get 
\begin{align*}
 \vol\left(	R(g)\cap (-x+tf)	\right) = w^g_f
\end{align*}
and hence
\begin{align*}
\vol(tf) &= \sum_{g\leq f} \sum_{x\in \X(tg)} \vol\left(	(x+R(g))\cap tf	\right)\\
	& = \sum_{g\leq f}  \left| \X(tg) \right|\cdot w^g_f 
\end{align*}
as we wanted to show.
\end{proof}

\section*{Acknowledgements}

We like to thank the two anonymous referees, as well as Frieder Ladisch and Erik Friese for valuable comments.
Moreover, Maren H.~Ring is grateful for support by a PhD scholarship of Studienstiftung des
Deutschen Volkes (German Academic Foundation).
Both authors gratefully acknowledge support by DFG grant SCHU~1503/6-1.

%%%%%%%%%%%%%%%%%%%%%%%%%%%%%%%%%%%%%%%%%%%%%%%%%%%%%%%%%%%%%%%%%%%%%%%%%%%%%%%%%
%%%%%%%%%%%%%%%%%%%%%%%%%%%%%%%%%%%%%%%%%%%%%%%%%%%%%%%%%%%%%%%%%%%%%%%%%%%%%%%%%

\medskip

\end{document}